\numberwithin{equation}{section}
\theoremstyle{plain}
\newtheorem{theorem}{Theorem}[section]
\newtheorem{proposition}[theorem]{Proposition}
\newtheorem{corollary}[theorem]{Corollary}
\newtheorem{lemma}[theorem]{Lemma}
\theoremstyle{definition}
\theoremstyle{remark}
\newcommand{\xra}{\xlongrightarrow}
\newcommand{\CP}[1]{\mathbb{C}P^{#1}}
\newcommand{\p}{\mathfrak{P}}
\newcommand{\an}[1]{\ensuremath{\mathbf{A}_{n}^{#1}}}
\newcommand{\PD}{Poincar\'{e} duality\,}
\newcommand{\Z}{\ensuremath{\mathbb{Z}}}
\newcommand{\z}[1]{\ensuremath{\mathbb{Z}/2^{#1}}}
\DeclareMathOperator{\coker}{coker}
\DeclareMathOperator{\im}{im}
\DeclareMathOperator{\Hom}{Hom}
\DeclareMathOperator{\sq}{Sq}
\newcommand{\matwo}[2]{\ensuremath{\left(\begin{smallmatrix}
#1\\
#2
\end{smallmatrix}}\right)}
\title{Homotopy Types of Suspended $4$-manifolds}
\author{Pengcheng Li}
\address{School of Sciences, Great Bay University, Dongguan \rm{523000},  China}
\email{lipcaty@outlook.com}
\subjclass[2020]{55P15, 55P40,57N65}
\keywords{homotopy decomposition, suspension, four-manifolds, $\mathbf{A}_3^3$-complexes, Pontryagin square}
\begin{document}

\begin{abstract}
  Given a closed, smooth, connected, orientable $4$-manifold $M$, whose integral homology groups can have $2$-torsion, we determine the homotopy decomposition of the double suspension $\Sigma^2M$ as wedge sums of some elementary $\mathbf{A}_3^3$-complexes, which are $2$-connected finite complexes of dimension at most $6$. Furthermore, we utilize the Postnikov square (or equivalently Pontryagin square) to find sufficient conditions for the homotopy decompositions of $\Sigma^2M$ to desuspend to that of $\Sigma M$.
\end{abstract}

\maketitle

%\tableofcontents

\section{Introduction}\label{sec:intro}

Recently, research on the homotopy properties of manifolds has emerged in two directions. The first direction is the loop homotopy of manifolds, which can be traced back to Beben and Wu's work \cite{BW15} in 2011. After them, many people made efforts to promote the development of this project, such as Beben, Theriault and Huang \cite{BT14,TB22,HT22}. On the other hand, as exhibited by So and Theriault \cite{ST19},  the suspension homotopy of manifolds has rich applications in some important objects of geometry and physics, such as gauge groups and current groups. Hereafter, this research direction has been widely studied, such as \cite{Huang21,Huang22,Huang-arxiv,CS22,HL}. 

This paper contributes to a further research on the suspension homotopy of manifolds. In the above related literature, due to some intractable obstructions, the authors usually avoid to handle $2$-torsions of the integral homology groups of the manifolds. For example, So and Theriault \cite{ST19} required the $4$-manifolds are $2$-torsion-free in integral homology, Huang \cite{Huang-arxiv} restricts to $6$-manifolds with integral homology groups containing no $2,3$-torsions, while Cutler and So \cite{CS22}, Huang and Li \cite{HL} respectively studied the suspension homotopy of simply-connected $6$-manifolds and $7$-manifolds after localization away from $2$.

In this paper we developed new technique and tools in homotopy theory to obtain \emph{relatively complete} (see comments below Theorem \ref{thm:S2M}) characterizations of the homotopy types of suspended $4$-manifolds which can have $2$-torsion in homology. For instance, we successfully apply certain homotopy properties of some \emph{$\mathbf{A}_n^3$-complexes} (defined below) to obtain the homotopy decompositions of $\Sigma^2M$. Moreover, the Postnikov squaring operation (\ref{eq:PostSq}) and the Pontryagin squaring operation (\ref{eq:PontrSq}) appear to be powerful in the characterizations of the homotopy type of $\Sigma M$, see Section \ref{sect:proofs}.
 
To make sense of the Introduction, we need the following notions and notations. Let $G$ be an abelian group and let $n$ be a positive integer. Denote by $H_n(X;G)$ (resp., $H^n(X;G)$) the $n$-th (singular) homology (resp., cohomology) group of $X$ with coefficients in $G$, and denote by $P^n(G)$ the $n$-dimensional Peterson space (cf. \cite{Neisenbook}) which admits a unique non-trivial reduced integral cohomology group $G$ in dimension $n$. In particular, for integers $n,k\geq 2$, we denote by $\Z/k=\Z/k\Z$ the group of integers modulo $k$. Recall the Peterson space have the cell structure
\[ P^n(k)=P^n(\Z/k)=S^{n-1}\cup_k e^n,\] 
which admits the obvious inclusion $i_{n-1}$ of the bottom sphere $S^{n-1}$ into $P^n(k)$ and the pinch map $q_n$ onto $S^n$. 
For each $n\geq 3$,  there is a generator $\tilde{\eta}_r\in \pi_{n+1}(P^n(2^r))$ satisfying the formula
\[q_n\tilde{\eta}_r\simeq \eta_n,\] 
see Lemma \ref{lem:Moore-htpgrps}, where $\eta_n\colon S^{n+1}\to S^n$ is the iterated suspensions of the Hopf map $\eta\colon S^3\to S^2$.  For a homomorphism $\phi\colon G\to G'$ of groups, $\ker(\phi)$ and $\im(\phi)$ denote the kernel and the image subgroups of $\phi$, respectively.

A finite CW-complex $X$ is called an \emph{$\an{k}$-complex} if $X$ is $(n-1)$-connected and has dimension at most $n+k$. It is well-known that elementary (or called indecomposable) $\an{1}$-complexes consist of spheres $S^n,S^{n+1}$ and the Moore spaces $P^{n+1}(p^r)$ with $p$ odd primes and $r\geq 1$.  One may consult \cite{ZP17,lpc,ZLP19,ZP21,BH91} for more homotopy theory of such complexes. We need the following elementary $\an{3}$-complexes with $n\geq 3$ and $r,s\geq 1$: 
\begin{align*}
  C^{n+2}_\eta&=S^n\cup_\eta\bm C S^{n+1}=\Sigma^{n-2}\CP{2},\\ 
  C^{n+2}_r&=P^{n+1}(2^r)\cup_{i_{n}\eta}\bm C S^{n+1},\quad 
  C^{n+2,s}=S^n\cup_{\eta q_{n+1}}\bm C P^{n+1}(2^s),\\
  C^{n+2,s}_r&=P^{n+1}(2^r)\cup_{i_n\eta q_{n+1}}\bm C P^{n+1}(2^s);\\
  A^{n+3}(\eta^2)&=S^n\cup_{\eta^2}\bm C S^{n+2},\\
  A^{n+3}(\tilde{\eta}_r)&=P^{n+1}(2^r)\cup_{\tilde{\eta}_r}\bm C S^{n+2},\quad 
  A^{n+3}({2^r}\eta^2)=P^{n+1}(2^r)\cup_{i_n\eta^2}\bm C S^{n+2}.
\end{align*}
Here the first four $\an{2}$-complexes are the \emph{elementary Chang-complexes} (due to Chang \cite{Chang50}), and the last two spaces are the only two $\an{3}$-complexes with the homology groups:
\[H_n\cong\z{r},\quad H_{n+3}=H_0\cong\Z,\quad H_i=0 \text{ for $i\neq 0,n,n+3$}.\]
Compare \cite[Theorem 10.3.1]{Bauesbook}. Note that all of the above $\an{3}$-complexes desuspend: they can be defined for $n\geq 2$.

To deal with $2$-torsions in $H_\ast(M;\Z)$, we shall employ  the following cohomology operations. Let $X$ be a connected CW-complex. For each $r\geq 1$, there are unstable cohomology operations: the \emph{Postnikov square} 
\begin{equation}\label{eq:PostSq}
  \p_0\colon H^1(X;\z{r})\to H^3(X;\z{r+1})
\end{equation}
and the \emph{Pontryagin square} 
\begin{equation}\label{eq:PontrSq}
  \p_1\colon H^2(X;\z{r})\to H^4(X;\z{r+1}).
\end{equation}
These two operations were carefully studied by Whihtehead \cite{Whitehead50,Whitehead51}. Note that $\p_0$ is the \emph{suspension operation} of $\p_1$:
\begin{equation}\label{P0-P1}
  \sigma \p_0=\p_1\sigma,
\end{equation}
where $\sigma \colon H^\ast(X;G)\to H^{\ast+1}(\Sigma X;G)$ is the suspension isomorphism. 
The Adem relations 
\[\sq^3=\sq^1\sq^2,\quad 
\sq^3\sq^1+\sq^2\sq^2=0\]
yield the secondary operation $\Theta_n$ based on the relation $\varphi_n\theta_n=0$ with
\begin{equation}\label{eq:Theta-null}
  \begin{aligned}
 \theta_n=\matwo{\sq^2\sq^1}{\sq^2}&\colon K_n\to K_{n+3}\times K_{n+2},\\
 \varphi_n=(\sq^1,\sq^2)&\colon K_{n+3}\times K_{n+2}\to K_{n+4},
\end{aligned} 
\end{equation}
where $n\geq 1$, $K_m=K_m(\z{})$ denotes the Eilenberg-MacLane space of type $(\z{},m)$.  For each $r\geq 1$, the higher order Bockstein operations 
\begin{equation}\label{eq:highBockstein}
  \beta_r\colon H^\ast(X;\z{})\dashrightarrow H^{\ast+1}(X;\z{})
\end{equation}
 are inductively defined by setting $\beta_1$ as the usual Bockstein homomorphism associated to the short exact sequence \[0\to \z{}\to \Z/4\to \z{}\to 0;\]  
 for $r\geq 2$, $\beta_r$ is defined on the intersection of  $\ker(\beta_i)$, $i<r$, and takes values in the quotient by the $\im(\beta_i)$, $i<r$. This is also indicated by the dashed arrow in (\ref{eq:highBockstein}). See \cite[Section 5.2]{Harperbook} for more details. 
Note that the higher Bocksteins $\beta_r$ and the sequence $\Theta=\{\Theta_n\}_{n\geq 1}$ are both  \emph{stable} (cf. \cite[4.2.2]{Harperbook}): 
\[\Omega\beta_r=\beta_r,\quad \Omega \Theta_{n+1}=\Theta_n.\]

Let $M$ be a closed,  smooth, connected, orientable  $4$-manifold. By \PD and the universal coefficient theorem for cohomology, the homology groups $H_\ast(M;\Z)$ are given by the following table:
\begin{table}[H]
  \centering
  \begin{tabular}{cccccc}
    \toprule
    $i$& $0,4$& $1$&$2$&$3$&$\geq 5$\\
    \midrule
    $H_i(M;\Z)$&$\Z$&$\Z^m\oplus T$&$\Z^d\oplus T$&$\Z^m$&$0$\\
    \bottomrule
  \end{tabular}
  \caption{$H_\ast(M;\Z)$}\label{tab:M}
\end{table}
\noindent where $m,d$ are non-negative integers, and $T$ is a finitely generated torsion abelian group. Denote the $2$-primary component of $T$ by
\[T_2=\bigoplus_{j=1}^{n}\z{r_j}.\] 

Now it is prepared to state our first main theorem.

\begin{theorem}\label{thm:S2M}
  Let $M$ be a closed,  smooth, connected, orientable  $4$-manifold with integral homology $H_\ast(M;\Z)$ given by Table \ref{tab:M}. 
  \begin{enumerate}[1.]
    \item\label{S2M-spin} Suppose that $M$ is spin, then $\Sigma^2M$ has two possible homotopy types: 
    \begin{enumerate}
      \item If $\Theta\big(H^1(M;\z{})\big)=0$, then there is a homotopy equivalence 
      \[\Sigma^2M\simeq \big(\bigvee_{i=1}^m(S^3\vee S^5)\big)\vee \big(\bigvee_{i=1}^d S^4\big)\vee P^4(T)\vee P^5(T) \vee S^6.\]
      \item If $\Theta\big(H^1(M;\z{})\big)\neq 0$, then 
      \[\Sigma^2M\simeq \big(\bigvee_{i=1}^m(S^3\vee S^5)\big)\vee\big(\bigvee_{i=1}^d S^4\big)\vee P^4(\frac{T}{\z{r_{j_0}}})\vee P^5(T) \vee  A^6(2^{r_{j_0}}\eta^2), \]
      where $j_0$ is the maximum of the indices $j\leq n$ such that 
      \[\Theta(x)\neq 0,\beta_{r_j}(x)\neq 0,~x\in H^1(M;\z{}).\]
    \end{enumerate} 

    \item\label{S2M-nonspin} Suppose that $M$ is non-spin and $\Theta\big(H^1(M;\z{})\big)=0$, then $\Sigma^2M$ has three possible homotopy types:  
    \begin{enumerate}
      \item If for any $u\in H^4(\Sigma^2 M;\z{})$ with $\sq^2(u)\neq 0$ and any $v\in \ker(\sq^2)$, there hold
      \[
        \beta_r(u+v)=0,\quad u+v\notin \im(\beta_s),\quad \forall~r,s\geq 1,\]
        then there is a homotopy equivalence
        \[\Sigma^2M\simeq \big(\bigvee_{i=1}^m(S^3\vee S^5)\big)\vee \big(\bigvee_{i=1}^{d-1}S^4\big)\vee P^4(T)\vee P^5(T)\vee C^6_\eta.\]

     \item Suppose that for any $u\in H^2(M;\z{})$ with $\sq^2(u)\neq 0$ and any $v\in \ker(\sq^2)$, there hold 
     \[ u+v\notin \im(\beta_s),  ~\forall~s\geq 1,\] 
      while there exist $u'\in H^2(M;\z{})$ with $\sq^2(u')\neq 0$ and $v'\in \ker(\sq^2)$ such that 
      \[\beta_r(u'+v')\neq 0\text{ for some }r\geq 1.\]
     Then there is a homotopy equivalence 
\[\Sigma^2M\simeq \bigvee_{i=1}^m(S^3\vee S^5)\vee\bigvee_{i=1}^dS^4\vee P^4(T)\vee P^5(\frac{T}{\z{r_{j_1}}})\vee C^6_{r_{j_1}},\]
where $j_1$ is the maximum of the indices $j\leq n$ such that 
\[\sq^2(u')\neq 0,\quad \beta_r(u'+v')\neq 0.\]

\item Suppose that there exist $u\in H^2(M;\z{})$ with $\sq^2(u)\neq 0$ and $v\in \ker(\sq^2)$ such that \[u+v\in \im(\beta_r)\text{ for some $r$},\] 
then there is a homotopy equivalence
\[\Sigma^2M\simeq  \big(\bigvee_{i=1}^m(S^3\vee S^5)\big)\vee\big(\bigvee_{i=1}^dS^4\big)\vee P^4(\frac{T}{\z{r_{j_2}}})\vee P^5(T)\vee A^6(\tilde{\eta}_{r_{j_2}}),\]
where $j_2$ is the minimum of the indices $j\leq n$ such that $u+v\in \im(\beta_{r_j})$. 

    \end{enumerate}
  \end{enumerate}
\end{theorem}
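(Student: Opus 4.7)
The strategy is to build $\Sigma^2 M$ through its homology decomposition $\ast \subset X_3 \subset X_4 \subset X_5 \subset \Sigma^2 M$ and identify, layer by layer, the possible attaching maps. Since $\Sigma^2 M$ is $2$-connected with cells concentrated in dimensions $3$–$6$, the $3$-section $X_3$ is the wedge $\bigvee^m S^3 \vee P^4(T)$ reflecting $H_3(\Sigma^2 M)\cong H_1(M;\Z)$. Attaching cells realising $H_4$ (coming from $H_2(M)$) gives $X_4$; attaching cells realising $H_5$ (from $H_3(M)$) together with $P^5(T)$ gives $X_5$; finally the orientation class of $M$ attaches the top $6$-cell.

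I would first show that $X_4$ and $X_5$ always split as wedges of spheres and Peterson spaces, independently of the spin-ness of $M$. The relevant $k$-invariants for $X_4$ live in $\pi_3(X_3)$, which via a direct computation (using Lemma \ref{lem:Moore-htpgrps} for the torsion part and the fact that Whitehead products of suspensions vanish) cannot support any non-trivial attachment compatible with the prescribed homology of $X_4$. A parallel argument for $\pi_4(X_4)$ using the $\pi_4$-part of Lemma \ref{lem:Moore-htpgrps} yields $X_5 \simeq X_4 \vee \bigvee^m S^5 \vee P^5(T)$. All non-triviality is thereby concentrated in the attaching class $f \in \pi_5(X_5)$ of the top $6$-cell.

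To decompose $f$, I would expand $\pi_5(X_5)$ as a direct sum of $\pi_5$ of each wedge factor (Whitehead products vanishing in our range) and identify the $2$-primary generators: the class $\eta^2$ in each $\pi_5(S^3)$, the class $\eta$ in each $\pi_5(S^4)$, the degree class in each $\pi_5(S^5)$, the class $\tilde\eta_r \in \pi_5(P^4(2^r))$ supplied by Lemma \ref{lem:Moore-htpgrps} and satisfying $q_4 \tilde\eta_r=\eta_4$, and the analogous torsion classes in $\pi_5(P^5(2^r))$. The key dictionary is then that (i) the $\eta$-components of $f$ landing in $S^4$-summands are detected by $\sq^2$ on $H^4(\Sigma^2M;\z{})\cong H^2(M;\z{})$, which is non-trivial exactly in the non-spin case; (ii) the $\tilde\eta_r$-components in $P^5(2^r)$-summands are detected by the higher Bockstein $\beta_r$; and (iii) the $\tilde\eta_r$-components in $P^4(2^r)$-summands are detected by the secondary operation $\Theta$, using its defining relation (\ref{eq:Theta-null}) and the suspension formula (\ref{P0-P1}) to transport it from $M$ to $\Sigma^2 M$. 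Matching these three detections with the case hypotheses and isolating the ``bad'' component in a single summand via the extreme index $j_0$, $j_1$ or $j_2$, the remaining part of $X_5$ splits off as a wedge while the bad summand and the bad component of $f$ assemble into exactly one of $C^6_\eta$, $C^6_{r_{j_1}}$, $A^6(\tilde\eta_{r_{j_2}})$ or $A^6(2^{r_{j_0}}\eta^2)$.

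The hard part is precisely this last step: proving that the stratification by $\sq^2$, $\beta_r$ and $\Theta$, combined with the minimality/maximality choice of $j_0$, $j_1$, $j_2$, exhausts the possible attaching classes $f$ and collapses every admissible combination to one of the four listed models. This requires tight control over the interaction of these operations, notably the quadratic behaviour (\ref{eq:quad}) of the Pontryagin square, the $\Omega$-stability of $\beta_r$ and $\Theta$, and a careful change-of-basis argument on $T_2=\bigoplus_{j=1}^n\z{r_j}$ ensuring that the isolated ``bad'' summand is canonical up to homotopy. I expect this bookkeeping, rather than the identification of individual $2$-primary classes in $\pi_5$, to be the main technical obstacle of the proof.
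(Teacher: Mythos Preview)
Your overall architecture—build $\Sigma^2 M$ through its homology sections, isolate the top attaching class in $\pi_5(X_5)$, then change basis to reduce to a single indecomposable summand—is essentially the paper's. The substantive gap is in your dictionary (ii)–(iii), which misidentifies both the generators and the detecting operations. The group $\pi_5(P^5(2^r))$ is $\z{}\langle i_4\eta\rangle$ (Lemma~\ref{lem:Moore-htpgrps}(2)), not generated by any $\tilde\eta_r$; its cofibre is the Chang complex $C^6_r$ and is detected by $\sq^2$ (Lemma~\ref{lem:StSq-Changcpx}). For $r\ge 2$ the group $\pi_5(P^4(2^r))$ has \emph{two} independent generators, $\tilde\eta_r$ and $i_3\eta^2$ (Lemma~\ref{lem:Moore-htpgrps}(3)): the cofibre $A^6(\tilde\eta_r)$ of the first is again detected by $\sq^2$ (Lemma~\ref{lem:StSq-C_r}), whereas it is the \emph{second}, with cofibre $A^6(2^r\eta^2)$, that is detected by $\Theta$ (Lemma~\ref{lem:eta2}). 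Thus in the non-spin part all three subcases are governed by $\sq^2$; the higher Bocksteins $\beta_r$ do not themselves detect any component of $f$ but only locate the $\sq^2$-supporting class among the wedge summands (free, bottom of a $P^5(2^r)$, or top of a $P^4(2^r)$). The spin case~(1)(b) is produced by the generator $i_3\eta^2$ that your list of $\pi_5$-classes omits. With the dictionary as you wrote it, the case split cannot be made to match the statement.

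A second point you pass over: nothing in your outline forces the $\eta^2$-components of $f$ landing in $\bigvee^m S^3$ to vanish, so a priori $A^6(\eta^2)$ could appear. The paper eliminates this at the outset via the splitting $\Sigma M\simeq(\bigvee^m S^2)\vee\Sigma W$ of (\ref{eq:SW}), which makes those $S^3$'s wedge summands of $\Sigma^2 M$ before the top cell is attached; all the analysis then takes place on $\Sigma^2 W$. (Working one suspension down and then suspending is also what actually makes the intermediate sections split in the paper; your appeal to Lemma~\ref{lem:Moore-htpgrps} alone is not enough, since homologically trivial maps between mod~$2^r$ Moore spaces need not be null.) Finally, the Pontryagin-square identities (\ref{eq:quad}) and (\ref{P0-P1}) play no role in Theorem~\ref{thm:S2M}; they belong to the proof of Theorem~\ref{thm:SM}.
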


We omit the discussion of the case where $M$ is non-spin and $\Theta$ acts non-trivially on $H^1(M;\z{})$. In this case, the double suspension of $M$ also has three possible homotopy types, which are similar to the second part (\ref{S2M-nonspin}) of Theorem \ref{thm:S2M}. The main difference is that there exists some index $j_0$, which is determined by $\Theta$ as in (\ref{S2M-spin}) and is usually different from $j_2$, such that $A^{6}(2^{r_{j_0}}\eta^2)$ is a wedge summand of $\Sigma^2 M$. Moreover, there maybe exists non-trivial Whitehead products,  one may need to consider $\Sigma^3M$ to  obtain a complete characterizations of the suspension homotopy of $M$.

We also study the homotopy type of the suspension $\Sigma M$ in terms of the Postnikov square $\p_0$ (or equivalently the Pontryagin square $\p_1$).

\begin{theorem}\label{thm:SM}
  Let $M$ be a closed,  smooth, connected, orientable  $4$-manifold with $H_\ast(M;\Z)$ given by Table \ref{tab:M}. If the Postnikov square 
  \[\p_0\colon H^1(M;\z{r_j})\to H^3(M;\z{r_j+1})\]
  is trivial for each $j=1,2,\cdots,n$, then the desuspensions of the homotopy decompositions of $\Sigma^2M$ in Theorem \ref{thm:S2M} yield the homotopy decompositions of $\Sigma M$.

\end{theorem}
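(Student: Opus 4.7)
The plan is to run the proof of Theorem \ref{thm:S2M} one suspension lower, working directly with the homology decomposition of $\Sigma M$, and to observe that the hypothesis $\p_0=0$ is precisely what kills the single unstable obstruction that was automatically annihilated by the extra suspension in Theorem \ref{thm:S2M}.

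First I would build the homology decomposition
\[
X_2\hookrightarrow X_3\hookrightarrow X_4\hookrightarrow X_5=\Sigma M,
\]
in which each cofibre $X_n/X_{n-1}$ is a Moore/Peterson space realising $H_n(\Sigma M;\Z)\cong H_{n-1}(M;\Z)$. From Table \ref{tab:M} the bottom section is $X_2\simeq\bigl(\bigvee_{i=1}^{m}S^2\bigr)\vee P^2(T)$, and the top cofibre is $S^5$, coming from the orientation class. Every attaching map is detected by primary or secondary cohomology operations on $\Sigma M$. Among those operations, $\sq^1$, $\sq^2$, the secondary operation $\Theta$ and the higher Bocksteins $\beta_r$ are all stable, so their behaviour on $\Sigma M$ coincides with that on $\Sigma^2M$; the case analysis of Theorem \ref{thm:S2M} therefore transfers verbatim for these.

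The only genuinely unstable operation entering the analysis is the Pontryagin square $\p_1\colon H^2(\Sigma M;\z{r})\to H^4(\Sigma M;\z{r+1})$, which by the suspension identity (\ref{P0-P1}) is intertwined under $\sigma$ with the Postnikov square $\p_0$ on $M$. Hence the assumption $\p_0=0$ is equivalent to $\p_1=0$ on $\Sigma M$. With this in hand, each model summand appearing in Theorem \ref{thm:S2M} is a genuine suspension ($P^n(\z{r})=\Sigma P^{n-1}(\z{r})$, and analogously for $C^6_\eta$, $C^6_{r}$, $A^6(\tilde{\eta}_r)$ and $A^6(2^r\eta^2)$), so the one-fold desuspension of the right-hand side of each clause in Theorem \ref{thm:S2M} is a well-defined $1$-connected $5$-complex. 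Comparing its homology decomposition with $X_2\subset\cdots\subset X_5$ and invoking the same factorisation and splitting lemmas that underpin Theorem \ref{thm:S2M} then produces the required equivalence.

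The main obstacle will be checking, case by case, that each splitting step carried out in dimension $6$ in Theorem \ref{thm:S2M} still goes through in dimension $5$. The delicate inputs are the $2$-primary homotopy of Moore spaces (Lemma \ref{lem:Moore-htpgrps} and its companions) and the extensions of maps out of wedges of Moore spaces, both of which are sensitive to the unstable Pontryagin square. Once $\p_1=0$ on $\Sigma M$ is secured, every remaining obstruction is either stable---hence already handled by the proof of Theorem \ref{thm:S2M}---or vanishes for the same arithmetic reason as in that proof, so the induction through the homology sections closes without further input.
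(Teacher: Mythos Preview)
Your proposal is correct and follows essentially the same route as the paper: run the homology decomposition of $\Sigma W$ one level lower and observe that the only genuinely unstable obstruction is the Pontryagin square, after which the top-cell analysis of Propositions~\ref{prop:S2W-Spin}--\ref{prop:S2W-NSpin} desuspends verbatim. The one place you should make precise is \emph{where} $\p_1$ enters: it is needed exactly to kill the components $g_2^j\colon S^3\to P^3(2^{r_j})$ in the splitting of the fourth homology section (the paper's Lemma~\ref{lem:w4}), since $\pi_3(P^3(2^{r_j}))\cong\z{r_j+1}\langle i_2\eta\rangle$ and Lemma~\ref{lem:PontrSq} shows $t\cdot i_2\eta$ is detected precisely by $\p_1$---this is the step that in Lemma~\ref{lem:W_4} was handled for free by $\sq^2=0$ on $H^2(\Sigma W;\z{})$.
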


If $H_\ast(M;\Z)$ contains no $2$-torsion (i.e., $T_2=0$), then the homotopy decomposition $\Sigma M\simeq \bigvee_{i=1}^mS^2\vee \Sigma W$ (\ref{eq:SW}) implies that the Pontryagin square \[\p_1\colon H^1(\Sigma M;\z{r_j})\to H^3(\Sigma M;\z{r_j+1})\] 
 is trivial, hence so is $\p_0$ by (\ref{P0-P1}). Hence Theorem \ref{thm:SM} extends So and Theriault's results \cite[Theorem 1.1]{ST19}.  However, the author didn't find any other $4$-manifolds $M$ satisfying conditions in Theorem \ref{thm:SM}. This is also why we arrange the above theorem after Theorem \ref{thm:S2M}.

The paper is organized as follows. In Section \ref{sec:telems} we review some homotopy theory of partial elementary $\mathbf{A}_n^3$-complexes and list some technical lemmas about the Pontryagin or Steenrod square operations. Section \ref{sec:AnalyMtd} introduces the main analysis methods adopted in this paper, including an useful criteria to determine the homtopy type of suspensions and the matrix method to determine the homotopy type of homotopy cofibres of certain maps. Section \ref{sec:homolydecomp} simply analyses the homology decomposition of the suspension $\Sigma M$. In Section \ref{sect:proofs} we utilize the methods developed in Section \ref{sec:AnalyMtd} to give a detailed discussion on the homotopy decompositions of our suspended four-manifolds. At the end, we prove Theorem \ref{thm:S2M} and \ref{thm:SM}, respectively.

\subsection*{Acknowledgements}
The author would like to thank Jianzhong Pan for some helpful discussion on Proposition \ref{prop:S2W-NSpin}. The author was partially supported by the National Natural Science Foundation of China (Grant no. 12101290).

\section{Some technical lemmas}\label{sec:telems}

 In this section we recall some homotopy groups of mod $2^r$ Moore spaces and prove some lemmas about the Pontryagin or Steenrod square operations. 

 Throughout, all spaces $X,Y,\cdots$ are based connected CW-complexes, and $[X,Y]$ is the set of based homotopy classes of based maps from $X$ to $Y$. We identify a map $f$ with its homotopy class in notation. For composable maps $g$ and $f$,  denote by $gf$ or $g\circ f$ the composition of $g$ with $f$. Unless otherwise specified,  $\bm C X$ denotes the reduced mapping cone of a space $X$, and $C_f$ denotes the homotopy cofibre of a given map $f\colon X\to Y$. For a cyclic group $G$, $G\langle x\rangle$ means $x$ is a generator of $G$.

\subsection{Some homotopy theory of mod $2^r$ Moore spaces}\label{sec:Moore} 
Let $n,k\geq 2$.
There is a homotopy cofibration for the mod $k$ Moore space $P^n(k)$:
\[S^{n-1}\xra{k}S^{n-1}\xra{i_{n-1}}P^n(k)\xra{q_n}S^n,\]
where $i_{n-1}$ and $q_n$ are the canonical inclusion and projection, respectively. Recall that if $2$ doesn't divide $k$, then 
\[\pi_n(P^n(k))=\pi_{n+1}(P^n(k))=0,~\forall~n\geq 3.\]
For each $r,s\geq 1$, let $\rho_r\colon \Z\to \z{r}$ be the reduction mod $2^r$ with $1_r=\rho_r(1)$, let $\chi^r_s\colon \z{r}\to \z{s}$ be the homomorphism given by 
\begin{equation}\label{eq:chi-grp}
  \chi^r_s(1_r)=\left\{\begin{array}{ll}
  1_s&r\geq s;\\
  2^{s-r}1_s&r<s.
\end{array}\right.
\end{equation}
For each $n\geq 3$, there exists a map (with $n$ omitted in notation) 
\[B(\chi^r_s)\colon P^{n+1}(2^r)\to P^{n+1}(2^s)\] 
 such that 
 \[H_n(B(\chi^r_s))=\chi^r_s, \quad \Sigma B(\chi^r_s)=B(\chi^r_s).\]
Moreover,  $B(\chi^r_s)$ satisfies the  relation formulas (cf. \cite{BH91}):
  \begin{equation}\label{eq:chi}
    \begin{aligned}
      B(\chi^r_s)i_{n}=\left\{\begin{array}{ll}
    i_{n},&r\geq s;\\
    2^{s-r}i_{n},&r\leq s.
  \end{array}\right.& ~q_{n+1}B(\chi^r_s)=\left\{\begin{array}{ll}
    2^{r-s}q_{n+1},&r\geq s;\\
    q_{n+1},&r\leq s.
  \end{array}\right.
    \end{aligned}
  \end{equation}
  Note that a multiple $t\alpha$ (or written as $t\cdot \alpha$) of an element $\alpha\in\pi_k(X)$ coincides with the composite $\alpha\circ t$.

%We have the \emph{universal coefficient theorem for homotopy}.
%\begin{lemma}[cf.Proposition 1.3.4 \cite{Bauesbook}]\label{lem:UCT}
%  For each $n\geq 2$, there is a central extension of groups 
%  \[\begin{tikzcd}[sep=scriptsize]
 %   \Ext(G;\pi_{n+1}X)\ar[r,tail]&{[M_n(G),X]}\ar[r,"H_n",two heads]&\Hom(G,\pi_nX),
 % \end{tikzcd}\]
 % which is natural with respect to maps $X\to Y$ and maps $M_n(G)\to M_n(H)$. 
%\end{lemma}

\begin{lemma}\label{lem:Moore-htpgrps}
  Let $r\geq 1$ and $n\geq 3$ be integers.
\begin{enumerate}
  \item $\pi_{n-1}(P^{n}(2^r))\cong\z{r}\langle i_{n-1}\rangle$.
  \item $\pi_3(P^3(2^r))\cong \z{r+1}\langle i_2\eta\rangle$, $\pi_{n+1}(P^{n+1}(2^r))\cong\z{}\langle i_3\eta \rangle$.
    \item There are isomorphisms 
   \[\pi_{n+1}(P^n(2^r))\cong\pi_{n+2}(P^{n+1}(2^r))\cong \left\{\begin{array}{ll}
    \Z/4\langle \tilde{\eta}_1\rangle, & r=1;\\
    \z{}\langle \tilde{\eta}_r \rangle \oplus \z{}\langle i_2\eta^2\rangle,&r\geq 2, 
   \end{array}\right.\]
     where $\tilde{\eta}_r$ satisfies the formulas 
  \begin{equation}\label{eq:eta_r}
    \tilde{\eta}_r=B(\chi^1_r)\tilde{\eta}_1,\quad q_n\tilde{\eta}_r=\eta,\quad \tilde{\eta}_1=2\eta^2 q_{n+2}.
  \end{equation}
 \item Dually, there are isomrphisms 
  \[\pi^{n}(P^{n+2}(2^r))\cong \left\{\begin{array}{ll}
    \Z/4\langle \bar{\eta}_1 \rangle,& r=1;\\
    \z{}\langle \bar{\eta}_r\rangle\oplus\z{}\langle \eta^2 q_{n+2}\rangle,&r\geq 2,
  \end{array}\right. \]  
  where $\bar{\eta}_r$ satisfies the formula 
  \[\bar{\eta}_ri_{n+1}=\eta_n,\quad 2\bar{\eta}_1=\eta^2 q_{n+2}.\] 
 %\item The composite $\nu'=\bar{\eta}_1\tilde{\eta}_1$ is a generator of $\pi_6(S^3)\cong\Z/4$. 

\end{enumerate}

\begin{proof}
  (1) The isomorphism holds by the Hurewicz theorem.

 (2) By \cite[bottom of page 19, top of page 20]{Bauesbook},  there hold 
 \[\pi_n(P^n(2^r))\cong\left\{\begin{array}{ll}
  \Gamma(\z{r})\cong\z{r+1},&n=3;\\
  \z{r}\otimes\z{}\cong\z{},&n\geq 4.
 \end{array}\right.\]
 Here $\Gamma(\z{r})$ is the Whitehead's quadratic group, see \cite{Bauesbook} or \cite{Whitehead50}. The composite $i_{n-1}\eta$ is clearly a generator of $\pi_n(P^n(2^r))$.

 (3) By \cite[Proposition 11.1.12]{Bauesbook}, $\pi_4(P^3(2^r))$ is isomorphic to the stable homotopy group $\pi_4^s(P^3(2^r))$, whose generators and the relations (\ref{eq:eta_r}) refer to \cite{BH91}.

(4) The isomorphisms and the relation formulas follow by (3) under the Spanier-Whihtehead duality:
\[\pi^{n}(P^{n+2}(2^r))\cong \pi_{n+2}(P^{n+1}(2^r)).\]

%(5) See \cite[Section 2]{Gray14}.
\end{proof}
\end{lemma}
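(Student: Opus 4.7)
The plan is to compute each homotopy group by combining the defining cofibre sequence of the Moore space with standard unstable and stable calculations, and then to dualize.

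Parts (1) and (2) are quick. For (1), since $P^n(2^r)$ is $(n-2)$-connected with $H_{n-1}\cong\z{r}$, the Hurewicz theorem identifies $\pi_{n-1}(P^n(2^r))$ with $\z{r}$, generated by the bottom-cell inclusion $i_{n-1}$. For (2), I would invoke Whitehead's classical description of $\pi_n$ of an $(n-1)$-connected $n$-dimensional CW-complex in terms of the quadratic functor $\Gamma$: namely $\pi_3(P^3(2^r))\cong\Gamma(\z{r})\cong\z{r+1}$, whereas for $n\geq 4$ the stabilised formula $\z{r}\otimes\z{}\cong\z{}$ applies. In each case the composite $i_{n-1}\eta$ is a visible generator.

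For Part (3), I would apply the long exact homotopy sequence to the cofibration $S^{n-1}\xrightarrow{2^r}S^{n-1}\xrightarrow{i_{n-1}}P^n(2^r)\xrightarrow{q_n}S^n$. Since $\pi_{n+1}(S^{n-1})=\z{}\langle\eta^2\rangle$ and $\pi_{n+1}(S^n)=\z{}\langle\eta\rangle$ in this range, and multiplication by $2^r$ is zero on both, one extracts a short exact sequence
\[
0\to\z{}\to\pi_{n+1}(P^n(2^r))\to\z{}\to 0,
\]
with $\tilde{\eta}_r$ singled out as a lift of $\eta$ so that $q_n\tilde{\eta}_r=\eta$, and with the compatibility $\tilde{\eta}_r=\chi^1_r\tilde{\eta}_1$ forced by the relations (\ref{eq:chi}). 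To determine the extension (cyclic of order four when $r=1$ and split when $r\geq 2$) and to derive the identity $\tilde{\eta}_1=2\eta^2 q_{n+2}$, I would appeal to the stable Moore-space calculations of Baues--Hennes, using that for $n\geq 3$ the group $\pi_{n+1}(P^n(2^r))$ already lies in the stable range. The Freudenthal suspension theorem then yields the isomorphism $\pi_{n+1}(P^n(2^r))\cong\pi_{n+2}(P^{n+1}(2^r))$.

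Parts (4) and (5) are formal consequences. For (4), Spanier--Whitehead duality identifies $\pi^n(P^{n+2}(2^r))$ with $\pi_{n+2}(P^{n+1}(2^r))$ from (3), sending the generators $\tilde{\eta}_r$ and $i_{n-1}\eta^2$ to $\bar{\eta}_r$ and $\eta^2 q_{n+2}$, and converting $q_n\tilde{\eta}_r=\eta$ into $\bar{\eta}_r i_{n+1}=\eta_n$. For (5), the factorisation of the classical generator $\nu'\in\pi_6(S^3)\cong\Z/4$ as $\bar{\eta}_1\tilde{\eta}_1$ is well known and I would simply cite Gray. The main obstacle is Part (3): pinning down the extension structure of $\pi_4(P^3(2^r))$ and distinguishing the two $\z{}$ summands when $r\geq 2$ is the delicate step, requiring either careful Toda-bracket arguments or a direct appeal to the fully developed but nontrivial stable homotopy of mod-$2^r$ Moore spaces.
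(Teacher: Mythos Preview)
Your proposal is correct and follows essentially the same route as the paper: Hurewicz for (1), Whitehead's quadratic functor $\Gamma$ for (2), the stable identification together with the Baues--Hennes computations for (3), Spanier--Whitehead duality for (4), and a citation of Gray for (5). The only cosmetic difference is that in (3) you explicitly write out the short exact sequence coming from the cofibration before invoking the stable calculation, whereas the paper cites directly that $\pi_4(P^3(2^r))$ is already stable and defers everything to \cite{BH91}; both arguments land in the same place.
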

For simplicity we still denote $\tilde{\eta}_r\colon S^{n+1}\to P^n(2^r)$ the iterated suspensions of the generator $\tilde{\eta}_r$ of $\pi_4(P^3(2^r))$.
Combining (\ref{eq:chi}) and (\ref{eq:eta_r}), we have 
\begin{corollary}\label{cor:chi-eta}
 Let $r,s\geq 1$. There hold relations 
  \[B(\chi^r_s)\tilde{\eta}_r=\left\{\begin{array}{ll}
    \tilde{\eta}_s,&s\geq r;\\
    2^{r-s}\tilde{\eta}_s,&s\leq r.
  \end{array}\right.\]
\end{corollary}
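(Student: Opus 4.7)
The plan is to reduce the computation to analyzing the composite $\chi^r_s \chi^1_r \colon P^n(2) \to P^n(2^s)$, using the defining relation $\tilde{\eta}_r = \chi^1_r \tilde{\eta}_1$ from (\ref{eq:eta_r}). The case $r = s$ is immediate since $\chi^r_r$ is the identity, so assume $r \neq s$. Writing $\chi^r_s \tilde{\eta}_r = (\chi^r_s \chi^1_r) \tilde{\eta}_1$, one verifies via (\ref{eq:chi}) that $\chi^r_s \chi^1_r$ and $2^{\max(r-s,0)} \chi^1_s$ restrict to the same map on the bottom cell (both give $2^{s-1} i_{n-1}$ when $r \leq s$ and $2^{r-1} i_{n-1}$ when $r \geq s$) and compose with $q_n$ to give the same map (namely $q_n$ when $r \leq s$ and $2^{r-s} q_n$ when $r \geq s$). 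Using the exact sequence $\pi_n(P^n(2^s)) \xra{q_n^{\ast}} [P^n(2), P^n(2^s)] \xra{i_{n-1}^{\ast}} \pi_{n-1}(P^n(2^s))$, the two maps therefore differ by an element of the form $\tau \circ q_n$ for some $\tau \in \pi_n(P^n(2^s))$. Postcomposing with $\tilde{\eta}_1$ and invoking $q_n \tilde{\eta}_1 = \eta$ together with $\chi^1_s \tilde{\eta}_1 = \tilde{\eta}_s$ gives
\[ \chi^r_s \tilde{\eta}_r = 2^{\max(r-s,0)} \tilde{\eta}_s + \tau \circ \eta. \]

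The main obstacle is to rule out the correction term $\tau \circ \eta$ in $\pi_{n+1}(P^n(2^s))$. By Lemma \ref{lem:Moore-htpgrps}(2), $\tau = k \cdot i_{n-1}\eta$ for some integer $k$, so $\tau \circ \eta = k \cdot i_{n-1}\eta^2$; since $i_{n-1}\eta^2$ has order dividing $2$ in the splitting of Lemma \ref{lem:Moore-htpgrps}(3), only the parity of $k$ matters. I would argue that $k$ must be even by invoking the compatible construction of the maps $\chi^r_s$ from \cite{BH91}, which forces the composition identity $\chi^r_s \chi^1_r = 2^{\max(r-s,0)} \chi^1_s$ to hold strictly as homotopy classes. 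Granting this, the corollary follows at once, giving $\chi^r_s \tilde{\eta}_r = \tilde{\eta}_s$ when $s \geq r$ and $\chi^r_s \tilde{\eta}_r = 2^{r-s} \tilde{\eta}_s$ when $s \leq r$.
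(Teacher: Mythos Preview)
Your approach is essentially what the paper has in mind: the paper's ``proof'' consists of the single phrase ``Combining (\ref{eq:chi}) and (\ref{eq:eta_r})'', which amounts to writing $\chi^r_s\tilde{\eta}_r=\chi^r_s\chi^1_r\tilde{\eta}_1$ and then reading off the answer from properties of the $\chi$-maps taken from \cite{BH91}. You have expanded this considerably and, to your credit, isolated the genuine subtlety that the paper suppresses.

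That subtlety is real. The relations in (\ref{eq:chi}) do \emph{not} determine $\chi^1_s$ uniquely as a homotopy class: both $\chi^1_s$ and $\chi^1_s+i_{n-1}\eta q_n$ satisfy $f\,i_{n-1}=2^{s-1}i_{n-1}$ and $q_nf=q_n$, and precomposing these two candidates with $\tilde{\eta}_1$ yields elements differing by $i_{n-1}\eta^2$. So checking agreement on the bottom cell and under $q_n$ cannot by itself force the correction term $\tau\eta$ to vanish, exactly as you observed.

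The step you label ``Granting this'' is therefore the entire content of the corollary. It does hold: in \cite{BH91} the maps $\chi^r_s$ are specified by explicit cellular models (degree maps on each cell), and for such models the composite $\chi^r_s\chi^1_r$ is \emph{literally} the cellular map $2^{\max(r-s,0)}\chi^1_s$, not merely a map with the same boundary behaviour. Once that is said, your argument is complete and coincides with the paper's intended one. As written, however, your proof is a correct outline with an explicitly flagged gap rather than a finished proof; the fix is to state the composition identity $\chi^r_s\chi^{t}_r=2^{\max(r-s,0)}\chi^{t}_s$ (for $t\leq r$) as part of the data imported from \cite{BH91}, on the same footing as (\ref{eq:chi}).
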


%
 %\begin{lemma}\label{lem:S4P3}
 % Let $A=\bigoplus_{i=1}^n\z{r_i}$ with $r_i\geq 1$ and let $L_3(A,1)$ be the group generated by triple Whihtehead products $[[u_i,u_j],u_k]$ with $u_i\in \pi_2(P^3(a_i))$, $i,j,k=1,\cdots,n$.  There is a natural split short exact sequence 
 % \[0\to Wh_4(P^3(A))\to \pi_4(P^3(A))\xra{E^2}\pi_6(P^5(A))\to 0\]
 % with $Wh_4(P^3(A))\cong \bigoplus_{i<j}\pi_4\Sigma(P^2(a_i)\wedge P^2(a_j))\oplus L_3(A,1)$.
 % \begin{proof}
 %   See \cite[Remark 11.5.3 and Proposition 11.1.12]{Bauesbook}.
 % \end{proof}
%\end{lemma}

\subsection{Whitehead's quadratic functor}\label{subsect:quadratic}
Recall the \emph{Whitehead's quadratic functor} 
 \[\Gamma\colon \mathbf{Ab}\to \mathbf{Ab}\]
on the category $\mathbf{Ab}$ of abelian groups \cite{Whitehead50,Bauesbook2}. The functor $\Gamma$ is characterized by the following property: a function $\varphi\colon G\to G'$ between abelian groups is called \emph{quadratic} if $\varphi(x)=\varphi(-x)$ and the function $G\times G\to G'$ with $(x,y)\mapsto \varphi(x+y)-\varphi(x)-\varphi(y)$ is bilinear. For each abelian group $G$, there is a \emph{universal quadratic function} \[\gamma=\gamma_G\colon G\to \Gamma(G)\] such that 
for any quadratic function $\varphi\colon G\to G'$, there is a unique homomorphism $\varphi^\square\colon \Gamma(G)\to G'$ such that $\varphi=\varphi^\square\circ \gamma$. It follows that for homomorphism $\phi\colon G\to G'$, there is a unique induced homomorphism $\Gamma(\phi)\colon \Gamma(G)\to \Gamma(G')$ such that $\Gamma(\phi)\circ\gamma_G=\gamma_{G'}\circ \phi$. The universal quadratic function $\gamma=\gamma_G$ induces the bilinear pairing 
\begin{equation}\label{eq:[1,1]}
  [1,1]\colon G\otimes G\to \Gamma(G), [1,1](x,y)=\gamma(x+y)-\gamma(x)-\gamma(y).
\end{equation}

\begin{lemma}[cf. \cite{Bauesbook}]\label{lem:Gamma} 
  Let $G$ be an abelian group and let $n\geq 0$.
 \begin{enumerate}
  \item  For cyclic group $G=\Z/n$ we have 
  \[\Gamma(\Z/n)\cong \Z/(n^2,2n),\]
 where $\Z/0=\Z$ and $(n^2,2n)$ is the greatest common divisor. The group is generated by $\gamma(1_n)$ with $1_n=1+n\Z$. 
 
 \item For any $x\in G$, there holds $\gamma(nx)=n^2\gamma(x)$.
 \end{enumerate}
\end{lemma}

\subsection{Squaring operations}\label{subsect:SqOp}
For an abelian group $G$, the Pontryagin square 
\[\p_1\colon H^2(X;G)\to H^4(X;\Gamma(G))\]
 is a \emph{quadratic} function with respect to the cup product $\smallsmile$:
\begin{equation}\label{eq:quad}
  \begin{aligned}
    \p_1(-x)&=\p_1(x),\quad \p_1(nx)=n^2\p_1(x),\\
    \p_1(x+y)&=\p_1(x)+\p_1(y)+[1,1]_\ast(x\smallsmile y),
  \end{aligned}
\end{equation}
where $[1,1]_\ast$ is induced by the coefficient homomorphism (\ref{eq:[1,1]}).  The Pontryagin square is natural with respect to maps $X\to Y$ between spaces and with respect to homomorphisms $G\to G'$ between groups.

Let $X$ be $\mathbf{A}_2^2$-complex and let 
\[ C_4(X)\xra{d}C_3(X)\xra{d}C_2(X)\] be its cellular chain complex. Represent a cohomology class $x\in H^2(X;G)$ by a cocycle $\hat{x}\colon C_2(X)\to G$, which induces a unique homomorphism \[\tilde{x}\colon H_2(X)=C_2(X)/d C_3(X)\to G,\] and therefore a unique homomorphism 
\[\Gamma(\tilde{x})\colon \Gamma(H_2(X))\to \Gamma(G).\] 
By the universal coefficient theorem, there is an isomorphism
\[\mu\colon H^2(X;H_2(X))\xra{\cong} \Hom(H_2(X),H_2(X)).\] 
 Let $\iota_2\in H^2(X;H_2(X))$ be given such that $\mu(\iota_2)$ is the identity on $H_2(X)$. By \cite[Chapter I]{Bauesbook2} we know that the Pontryagin square 
\[\p_1\colon H^2(X;G)\to H^4(X;\Gamma(G))\] 
is completely determined by the \emph{Pontryagin element} 
\[\p_1(\iota_2)\in H^4\big(X;\Gamma(H_2(X))\big)\] in the sense that there holds an formula
\begin{equation*}\label{eq:PontrSq-uni}
  \p_1(x)=\Gamma(\tilde{x})_\ast(\p_1(\iota_2)),
\end{equation*} 
where $\Gamma(\tilde{x})_\ast$ is induced by the coefficient homomorphism.

Let $C_r(t\eta)$ be the homotopy cofibre of $t\cdot i_2\eta\colon S^3\to P^3(2^r)$,  $r\geq 1$ and $t\in\z{r+1}$. Note that $C_r(t\eta)$ is an $\mathbf{A}_2^2$-polyhedron and has the \emph{$\mathbf{A}_2^2$-form} 
 \begin{equation}\label{an2-form}
  f=(t\eta,2^r)\colon S^3\vee S^2\to S^2,
 \end{equation}
 i.e., $C_r(t\eta)$ is the homotopy cofibre of the attaching map $f$ between spheres. 

\begin{lemma}\label{lem:PontrSq}
  Let $t\in\z{r+1}$, $r\geq 1$. The Pontryagin square 
  \[\p_1\colon H^2(C_r(t\eta);\z{r})\to H^4(C_r(t\eta);\z{r+1})\] 
  is trivial if and only if $t=0$.
  \begin{proof}
    Let $\iota_2\in H^2(C_r(t\eta);\z{r})$ be the generator which corresponds to the identity on $H_2(C_r(t\eta))$.  By \cite[Chapter I, Proposition 7.6]{Bauesbook2} and the $\mathbf{A}_2^2$-form (\ref{an2-form}), the Pontryagin element $\p_1(\iota_2)$ is represented by the cocycle 
  \[t\cdot \Gamma(\rho_r)\gamma=\Gamma(\rho_r)(t\gamma)\colon \Z\xra{t\gamma}\Gamma(\Z)\xra{\Gamma(\rho_r)}\Gamma(\z{r}).\]
Note that $\Gamma(\rho_r)\gamma=\gamma \rho_r$ represents a generator of $H^4(C_r(t\eta);\Gamma(\z{r}))$, by the universal coefficient theorem. Then it follows by Lemma \ref{lem:Gamma} that $\p_1=0$ if and only if $t=0$. 
  \end{proof}
\end{lemma}

Recall that the Steenrod square 
\[\sq^2\colon H^n(-;\z{})\to H^{n+2}(-;\z{})\] 
is a stable cohomology operation such that $\sq^2(x)=x^2$ for any cohomology class $x$ of dimension $2$, cf. \cite[Section 4.L]{hatcherbook}.
\begin{lemma}[cf. \cite{ZP17}]\label{lem:StSq-Changcpx}
For any $n\geq 3$, the Steenrod square 
   \[\sq^2\colon H^n(C;\z{})\to H^{n+2}(C;\z{})\]
    is an isomorphism for each (elementary) Chang-complex $C$.
 \end{lemma}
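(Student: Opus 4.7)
The plan is to prove the isomorphism first for $C^{n+2}_\eta=\Sigma^{n-2}\CP{2}$ and then transport it to the remaining three elementary Chang-complexes by naturality of $\sq^2$ along suitable maps between the defining cofibre sequences. As a preliminary step, I would record that each of the four complexes $C$ has $H^n(C;\z{})\cong H^{n+2}(C;\z{})\cong\z{}$, which one reads off from the long exact sequence of the defining cofibration together with the observation that the attaching maps vanish in mod $2$ cohomology in the relevant degrees (the coefficients $2^r,2^t$ reduce to $0\bmod 2$, and $\eta^{\ast}$ is zero on $H^n(S^n;\z{})$).

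For the base case, the classical identity $\sq^2(x)=x\smallsmile x$ on $H^2(\CP{2};\z{})$ shows that $\sq^2\colon H^2(\CP{2};\z{})\to H^4(\CP{2};\z{})$ is an isomorphism, and the stability of $\sq^2$ then yields the analogous statement for $\Sigma^{n-2}\CP{2}=C^{n+2}_\eta$ for every $n\geq 3$.

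For the remaining complexes, I would construct maps of cofibre sequences that induce isomorphisms on both $H^n(-;\z{})$ and $H^{n+2}(-;\z{})$. Pairing $\mathrm{id}_{S^{n+1}}$ with $i_n\colon S^n\to P^{n+1}(2^r)$ gives $g\colon C^{n+2}_\eta\to C^{n+2}_r$; pairing $q_{n+1}\colon P^{n+1}(2^t)\to S^{n+1}$ with $\mathrm{id}_{S^n}$ gives $h\colon C^{n+2,t}\to C^{n+2}_\eta$; and pairing $q_{n+1}$ with $\mathrm{id}_{P^{n+1}(2^r)}$ gives $f\colon C^{n+2,t}_r\to C^{n+2}_r$. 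In each case, the induced map on the bottom cohomology degree $H^n$ is a mod $2$ iso because $i_n^{\ast}$ is a mod $2$ iso (or is the identity), and the induced map on $H^{n+2}$ is a mod $2$ iso because the suspension of the pinch map $q_{n+1}$ is a mod $2$ iso on the top cell (or the induced map is the identity).

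With these three naturality squares for $\sq^2$ in place, a two-out-of-three argument propagates the isomorphism of $\sq^2$ from $C^{n+2}_\eta$ to $C^{n+2}_r$ and to $C^{n+2,t}$, and then from $C^{n+2}_r$ to $C^{n+2,t}_r$. The main place where care is needed is verifying that each chosen pair of vertical maps actually commutes with the attaching maps of the cofibre sequences; but each identity to be checked, such as $i_n\circ\eta=i_n\eta\circ\mathrm{id}_{S^{n+1}}$, $\eta\circ q_{n+1}=\mathrm{id}_{S^n}\circ\eta q_{n+1}$, and $i_n\eta\circ q_{n+1}=\mathrm{id}_{P^{n+1}(2^r)}\circ i_n\eta q_{n+1}$, is immediate from the definitions collected in Section~\ref{sec:Moore}.
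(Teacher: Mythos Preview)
The paper does not give its own proof of this lemma; it is stated with a citation to \cite{ZP17}. Your argument is correct and self-contained: the base case for $C^{n+2}_\eta=\Sigma^{n-2}\CP{2}$ via $\sq^2(x)=x\smallsmile x$ on $H^2(\CP{2};\z{})$ plus stability is standard, and the three maps of cofibre sequences you construct do commute and do induce mod~$2$ isomorphisms in degrees $n$ and $n+2$, so naturality of $\sq^2$ transports the isomorphism as claimed. In fact your method is precisely the one the paper itself employs in the very next lemma (Lemma~\ref{lem:StSq-C_r}), where a map $d\colon A^{n+3}(\tilde\eta_r)\to C^{n+3}_\eta$ built from a diagram of cofibrations is used to pull back the $\sq^2$ isomorphism; so your proof is entirely in the spirit of the surrounding text and could serve as a drop-in replacement for the bare citation.
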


\begin{lemma}\label{lem:StSq-C_r}
 For each $n\geq 2,r\geq 1$, the Steenrod square  
 \[\sq^2\colon H^{n+1}(A^{n+3}(\tilde{\eta}_r);\z{})\to H^{n+3}(A^{n+3}(\tilde{\eta}_r);\z{})\]
is an isomorphism.
\begin{proof}
  By (\ref{eq:eta_r}) there is a homotopy commutative diagram of homotopy cofibrations (in which rows and columns are all homotopy cofibrations): 
  \[\begin{tikzcd}
    \ast\ar[d]\ar[r]&S^n\ar[r,equal]\ar[d,"i_n"]&S^n\ar[d]\\
    S^{n+2}\ar[r,"\tilde{\eta}_r"]\ar[d,equal]&P^{n+1}(2^r)\ar[d,"q_{n+1}"]\ar[r]&A^{n+3}(\tilde{\eta}_r)\ar[d,"d"]\\
    S^{n+2}\ar[r,"\eta"]&S^{n+1}\ar[r]&C^{n+3}_\eta
  \end{tikzcd}\]
 It follows that $d^\ast\colon H^k(C^{n+3}_\eta;\z{})\to H^k(A^{n+3}(\tilde{\eta}_r);\z{})$ is an isomorphism for $k=n+1,n+3$.
The isomorphism in the lemma then follows by Lemma \ref{lem:StSq-Changcpx} and the commutative square
\[\begin{tikzcd}
  H^{n+1}(C^{n+3}_\eta;\z{})\ar[r,"\sq^2","\cong"swap]\ar[d,"d^\ast","\cong"swap]&H^{n+3}(C^{n+3}_\eta;\z{})\ar[d,"d^\ast","\cong"swap]\\
  H^{n+1}(A^{n+3}(\tilde{\eta}_r);\z{})\ar[r,"\sq^2"]&H^{n+3}(A^{n+3}(\tilde{\eta}_r);\z{})
\end{tikzcd}\]
\end{proof}
 \end{lemma}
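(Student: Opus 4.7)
The plan is to reduce the claim to the known fact (Lemma~\ref{lem:StSq-Changcpx}) that $\sq^2$ is an isomorphism on the elementary Chang complex $C^{n+3}_\eta=\Sigma^{n-1}\CP{2}$. The bridge is a map $d\colon A^{n+3}(\tilde{\eta}_r)\to C^{n+3}_\eta$ that can be constructed from the key relation $q_n\tilde{\eta}_r\simeq\eta$ recorded in Lemma~\ref{lem:Moore-htpgrps}. This identity says precisely that the attaching map of the top cell of $A^{n+3}(\tilde{\eta}_r)$ becomes the Hopf map $\eta$ after pinching off the bottom sphere $S^n\subset P^{n+1}(2^r)$, so the two defining cofibrations should fit into a $3\times 3$ comparison diagram whose first column is the cofibration $S^n\xra{i_n}P^{n+1}(2^r)\xra{q_{n+1}}S^{n+1}$.

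First I would write down this $3\times 3$ diagram carefully: middle row the defining cofibration $S^{n+2}\xra{\tilde{\eta}_r}P^{n+1}(2^r)\to A^{n+3}(\tilde{\eta}_r)$, bottom row the defining cofibration $S^{n+2}\xra{\eta}S^{n+1}\to C^{n+3}_\eta$, and left-most vertical column the Moore-space cofibration above. Commutativity of the top-left square is exactly $q_n\tilde{\eta}_r\simeq \eta$, and the induced map $d$ on the horizontal cofibres exists by the dual Barratt--Puppe construction.

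Next I would analyze $d^\ast$ on mod $2$ cohomology. Both $A^{n+3}(\tilde{\eta}_r)$ and $C^{n+3}_\eta$ have $H^{n+1}(-;\z{})\cong H^{n+3}(-;\z{})\cong\z{}$; running the long exact cohomology sequences of the middle and bottom rows of the diagram, one sees that $d^\ast$ carries the generator of $H^{n+3}(C^{n+3}_\eta;\z{})$ (detected on the top cell) to the generator of $H^{n+3}(A^{n+3}(\tilde{\eta}_r);\z{})$, and carries the generator of $H^{n+1}(C^{n+3}_\eta;\z{})$ to the class in $H^{n+1}(A^{n+3}(\tilde{\eta}_r);\z{})$ pulled back along $q_{n+1}^\ast$. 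Hence $d^\ast$ is an isomorphism in degrees $n+1$ and $n+3$.

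Finally, by naturality of the Steenrod square I would compare $\sq^2$ on the two complexes in the square
\[
\begin{tikzcd}
H^{n+1}(C^{n+3}_\eta;\z{})\ar[r,"\sq^2","\cong"swap]\ar[d,"d^\ast","\cong"swap]& H^{n+3}(C^{n+3}_\eta;\z{})\ar[d,"d^\ast","\cong"swap]\\
H^{n+1}(A^{n+3}(\tilde{\eta}_r);\z{})\ar[r,"\sq^2"]&H^{n+3}(A^{n+3}(\tilde{\eta}_r);\z{})
\end{tikzcd}
\]
and read off the conclusion. The main obstacle is the bookkeeping for the $3\times 3$ diagram — arranging rows, columns and vertical arrows so that the middle and bottom rows really are the defining cofibrations and so that $d$ is uniquely characterized up to homotopy on the relevant cells. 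Once the diagram is in place, the cohomological step is formal. (A more direct alternative would use functional Steenrod squares associated with $\tilde{\eta}_r$, but this requires reproving what Lemma~\ref{lem:StSq-Changcpx} already packages cleanly.)
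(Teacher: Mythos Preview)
Your proposal is correct and follows essentially the same route as the paper: construct $d\colon A^{n+3}(\tilde{\eta}_r)\to C^{n+3}_\eta$ from the relation $q_{n+1}\tilde{\eta}_r\simeq\eta$ via a $3\times 3$ diagram of cofibrations, check that $d^\ast$ is an isomorphism in degrees $n+1$ and $n+3$, and conclude by naturality of $\sq^2$ together with Lemma~\ref{lem:StSq-Changcpx}. The only slip is that the Moore-space cofibration $S^n\to P^{n+1}(2^r)\to S^{n+1}$ sits in the \emph{middle} column (the left column being $\ast\to S^{n+2}\xra{=}S^{n+2}$), but you already flagged this bookkeeping as the point requiring care.
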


\subsection{Higher order cohomology operations}\label{subsect:highCO}

Recall the secondary cohomology operations 
\begin{equation}\label{eq:Theta}
 \Theta_n\colon S_n(X)\to T_n(X),
\end{equation}
based on the relation $\varphi_n\theta_n=0$ (\ref{eq:Theta-null}), where  
 \begin{align*}
  S_n(X)&=\ker(\theta_n)=\ker(\sq^2)\cap \ker(\sq^2\sq^1)\\
  T_n(X)&=\coker(\Omega\varphi_n)=H^{n+3}(X;\z{})/\im(\sq^1+\sq^2). 
 \end{align*}

 \begin{lemma}\label{lem:eta2}
Let $n\geq 2,r\geq 1$. For $X=A^{n+3}(\eta^2)$ or $A^{n+3}(2^r\eta^2)$, the secondary operation $\Theta_n$ acts non-trivially on $H^n(X;\z{})$; that is,
  \[0\neq\Theta_n \colon H^n(X;\z{})\to H^{n+3}(X;\z{}).\]

  \begin{proof}
    For $X=A^{n+3}(\eta^2)$ or $A^{n+3}(2^r\eta)$, we compute that
    \begin{align*}
     S_n(X)&=H^n(X;\z{})\cong\z{},\\
     T_n(X)&=H^{n+3}(X;\z{})\cong\z{}. 
    \end{align*}
    The proof of $\Theta_n\neq 0$ for $X=A^{n+3}(\eta^2)$ refers to \cite[page 96]{Harperbook}. 
   There is a homotopy cofibration 
   \[S^n\xra{i_n\circ 2^r}A^{n+3}(\eta^2)\xra{j}A^{n+3}(2^r\eta),\]
   which induces the following commutative square
    \[\begin{tikzcd}
      H^n(A^{n+3}(2^r\eta);\z{})\ar[d,"j^\ast", "\cong"swap]\ar[r,"\Theta_n"]&H^{n+3}(A^{n+3}(2^r\eta);\z{})\ar[d,"j^\ast","\cong"swap]\\
      H^n(A^{n+3}(\eta^2);\z{})\ar[r,"\Theta_n\neq 0"]&H^{n+3}(A^{n+3}(\eta^2);\z{})
    \end{tikzcd}\]
    Thus $\Theta\neq 0$ for $X=A^{n+3}(2^r\eta)$.
  \end{proof} 
 \end{lemma}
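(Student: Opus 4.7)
The plan is to first identify the source and target of $\Theta_n$ as the full cohomology groups, handle the base case $X = A^{n+3}(\eta^2)$ via classical results, and then transfer non-triviality to $X = A^{n+3}(2^r\eta^2)$ by naturality along a cofibration relating the two spaces.

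First I would compute cohomology. For both choices of $X$, the cell structure immediately gives $H^n(X;\z{}) \cong \z{}$ and $H^{n+3}(X;\z{}) \cong \z{}$. To conclude $S_n(X) = H^n(X;\z{})$ and $T_n(X) = H^{n+3}(X;\z{})$, I need the operations $\sq^1$, $\sq^2$, and $\sq^2\sq^1$ to vanish on the generator in degree $n$, and the image of $\sq^1 + \sq^2$ into $H^{n+3}$ to be zero. This is a short dimension-and-cell check: the only potentially nonzero intermediate group is $H^{n+1}(A^{n+3}(2^r\eta^2);\z{}) \cong \z{}$ coming from the mod $2^r$ cell, and $\sq^1$ from degree $n$ into it is zero because the bottom cell is attached to $S^n$ by an even multiple in the relevant sense.

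Second, for the base case $X = A^{n+3}(\eta^2) = S^n \cup_{\eta^2} e^{n+3}$, I would cite the classical fact that the secondary operation $\Theta_n$ built from the relation $\varphi_n\theta_n=0$ detects the composition $\eta^2 \in \pi_{n+2}(S^n)$, so it is nontrivial on the generator of $H^n(A^{n+3}(\eta^2);\z{})$. This is documented in \cite{Harperbook}.

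Third, for $X = A^{n+3}(2^r\eta^2)$, I would exhibit the homotopy cofibration
\[
S^n \xra{2^r \cdot \iota} A^{n+3}(\eta^2) \xra{j} A^{n+3}(2^r\eta^2),
\]
where $\iota\colon S^n \hookrightarrow A^{n+3}(\eta^2)$ is the bottom-cell inclusion; one verifies this by observing that attaching an $(n+1)$-cell to $A^{n+3}(\eta^2)$ along $2^r\cdot \iota$ precisely turns the bottom sphere into $P^{n+1}(2^r)$ and leaves the top-cell attaching map as $i_n\eta^2$. The induced map $j^*$ is an isomorphism in degrees $n$ and $n+3$ (the long exact sequence of the cofibration is trivially broken in those spots). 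By naturality of $\Theta_n$, the diagram
\[
\begin{tikzcd}
H^n(A^{n+3}(2^r\eta^2);\z{}) \ar[r,"\Theta_n"] \ar[d,"j^*","\cong"swap] & H^{n+3}(A^{n+3}(2^r\eta^2);\z{}) \ar[d,"j^*","\cong"swap] \\
H^n(A^{n+3}(\eta^2);\z{}) \ar[r,"\Theta_n"] & H^{n+3}(A^{n+3}(\eta^2);\z{})
\end{tikzcd}
\]
commutes, and the nonvanishing of the bottom row forces the nonvanishing of the top.

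The main obstacle is really the base case, which is not an elementary calculation but rests on the classical secondary-operation machinery detecting $\eta^2$; once that is granted, the transfer to $A^{n+3}(2^r\eta^2)$ is a short naturality argument. A minor bookkeeping point is justifying the cofibration identification above and confirming that $j^*$ is an isomorphism in the two relevant degrees, but this reduces to a direct comparison of the given cell structures.
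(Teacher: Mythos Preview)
Your proposal is correct and follows exactly the paper's argument: compute $S_n(X)$ and $T_n(X)$, cite Harper for $A^{n+3}(\eta^2)$, then use the cofibration $S^n \xra{2^r} A^{n+3}(\eta^2) \xra{j} A^{n+3}(2^r\eta^2)$ and naturality of $\Theta_n$ along $j^*$. One small correction to your bookkeeping: for $r=1$ the operation $\sq^1\colon H^n \to H^{n+1}$ is \emph{not} zero on $A^{n+3}(2\eta^2)$ (it is the Bockstein on $P^{n+1}(2)$), but this is harmless since $S_n(X)$ only requires $\sq^2$ and $\sq^2\sq^1$ to vanish, and $\sq^2\colon H^{n+1}\to H^{n+3}$ is zero because the top cell attaches through $i_n$.
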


 The higher order Bocksteins (\ref{eq:highBockstein})
 \[\beta_r\colon H^n(X;\z{})\dashrightarrow H^{n+1}(X;\z{}) \] 
  are helpful to detect torsion elements of $H_\ast(X;\Z)$ or $H^\ast(X;\Z)$.
 
 \begin{lemma}[cf. \cite{MT68}, page 173 and 61]\label{lem:Bockstein}
 The following statements hold:
   \begin{enumerate} 
     \item The higher Bockstein $\beta_r$ detects the degree $2^r$ map on $S^n$; in other words,  for each $r\geq 1$, there are exactly one non-trivial higher Bockstein  
     \[\beta_r\colon H^{n-1}(P^n(2^r);\z{})\to H^n(P^n(2^r);\z{}).\]
     \item For each $r\geq 1$, elements of $H^\ast(X;\z{})$ coming from free integral homology class lie in $\ker(\beta_r)$ and not in $\im(\beta_r)$.
   \item If $z\in H^{n+1}(X;\Z)$ generates a direct summand $\z{r}$ for some $r$, then there exist generators  $z'\in H^n(X;\z{})$ and $z''\in H^{n+1}(X;\z{})$ such that 
     \[ \beta_r(z')=z'',\quad \beta_i(z')=\beta_i(z'')=0 \text{~for~} i<r.\]
   \end{enumerate}
 \end{lemma}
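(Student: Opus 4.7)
The plan is to derive all three statements from standard properties of the mod $2$ Bockstein spectral sequence $(E_r^{*,*}, d_r)$ with $E_1 = H^\ast(X;\z{})$ and $d_r = \beta_r$, converging to $(H^\ast(X;\Z)/\text{torsion}) \otimes \z{}$. This machinery is developed in \cite{MT68} and \cite[Section 5.2]{Harperbook}, and once the inductive definition of $\beta_r$ given in (\ref{eq:highBockstein}) is identified with the spectral sequence differentials, the three items become essentially formal consequences of convergence and naturality.

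For (1), I would compute the Bockstein spectral sequence of $P^n(2^r)$ directly from its cell structure. The mod $2$ cohomology has exactly two generators $e_{n-1}$ and $e_n$, while the integral cohomology consists of a single summand $\z{r}$ in degree $n$ (no free part), so the abutment is trivial. Both generators must therefore die in the spectral sequence, and since $H^n(P^n(2^r);\Z)\cong\z{r}$ has order exactly $2^r$, the universal coefficient theorem forces $\beta_r(e_{n-1}) = e_n$ to be the unique non-trivial higher Bockstein.

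For (2), suppose $\bar{x} \in H^\ast(X;\z{})$ is the mod $2$ reduction of a free integral class $x$. Then $\bar{x}$ represents a non-zero element of the abutment $(H^\ast(X;\Z)/\text{torsion}) \otimes \z{}$, hence $\bar{x}$ is a permanent cycle and $\beta_r(\bar{x})=0$ for every $r$. Conversely, if $\bar{x} = \beta_r(y)$ for some $r$ and $y$, then $\bar{x}$ would already vanish at the $E_{r+1}$-page and therefore in the abutment, contradicting that $x$ was free. For (3), exploit the naturality of the Bockstein spectral sequence along a map $P^{n+1}(2^r)\to X$ that realises the chosen $\z{r}$ summand (such a map exists by obstruction theory since $z$ is represented by a cocycle of order $2^r$). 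Naturality transports the generators of Part (1) to classes $z'\in H^n(X;\z{})$ and $z''\in H^{n+1}(X;\z{})$ with $\beta_r(z')=z''$, and because these classes survive to the $E_r$-page they automatically satisfy $\beta_i(z')=\beta_i(z'')=0$ for $i<r$.

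The main obstacle I anticipate is not the homological algebra itself but the bookkeeping involved in matching the inductive definition of $\beta_r$ in (\ref{eq:highBockstein}) with the differential on the $E_r$-page; in particular, one must verify that the indeterminacies $\ker(\beta_i)$ and $\im(\beta_i)$ for $i<r$ featuring in the definition of $\beta_r$ correspond precisely to the subquotient $E_r^{*,*}$. Once this identification is cleanly stated, each of the three conclusions requires only a short argument, and for readers preferring a purely combinatorial approach one can alternatively run the definitions of $\beta_1,\ldots,\beta_r$ directly on the chain level for $P^n(2^r)$, verifying (1) by explicit cochain lifts modulo $2^{r+1}$ and then propagating this to general $X$ via cellular approximation to obtain (2) and (3).
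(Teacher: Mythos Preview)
Your proposal is correct and follows the standard Bockstein spectral sequence argument. Note, however, that the paper does not actually supply a proof of this lemma: it is stated with the attribution ``cf.\ \cite{MT68}, page 173 and 61'' and left without a proof environment. Your sketch via the Bockstein spectral sequence is precisely the content of the cited pages in Mosher--Tangora, so there is nothing to compare.
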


\section{Analysis methods}\label{sec:AnalyMtd}
In this section we list some auxiliary lemmas that simplify the proof arguments in the next section. These lemmas appear to be applicable to other similar problems as well, so we leave them in a separate section.

We say that a map $f\colon X\to Y$ is \emph{homologically trivial} if the induced  homomorphism $f_\ast\colon H_i(X)\to H_i(Y)$ is trivial for each $i$. 
\begin{lemma}[\cite{hatcherbook}, Theorem 4H.3]\label{lem:hlgdecomp}
  Let $X$ be a simply-connected space of dimension $N$. Write $H_i=H_i(X)$. Then there is a sequence $X_2\subseteq X_3\subseteq \cdots\subseteq X_m$ of subcomplexes $X_j$ of $X$ such that 
\begin{enumerate}[(1)]
  \item  $i_\ast\colon H_j(X_n)\cong H_j(X)$ for $j\leq n$ and $H_j(X_n)=0$ for $j>n$.
  \item $X_2=M_2(H_2)$, $X_N=X$.
  \item  There is a principal homotopy cofibration 
  \[M_n(H_{n+1})\xra{k_n} X_n\xra{i_n}X_{n+1}\to M_{n+1}(H_{n+1})\]
with $k_n$ homologically trivial.
\end{enumerate}

\end{lemma}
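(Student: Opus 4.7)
The plan is to construct the filtration $X_2 \subseteq X_3 \subseteq \cdots \subseteq X_N$ by induction on $n$, producing along the way maps $f_n\colon X_n \to X$ that realize the required homology isomorphism in low degrees. For the base case I would take $X_2 := M_2(H_2)$; since $X$ is simply connected, $\pi_2(X)\cong H_2(X)=H_2$ by Hurewicz, so one constructs $f_2$ by sending the $2$-sphere wedge summands of $M_2(H_2)$ to representatives of a set of generators of $H_2$ and then extending over the relation $3$-cells.

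For the inductive step I would replace $f_n$ by a cofibration via its mapping cylinder. The inductive hypothesis $f_{n\ast}\colon H_j(X_n)\cong H_j(X)$ for $j\leq n$, combined with the long exact sequence of the pair, shows that $H_j(X,X_n)=0$ for $j\le n$ and $H_{n+1}(X,X_n)\cong H_{n+1}$. The relative Hurewicz theorem then yields
\[
  \pi_{n+1}(X,X_n) \twoheadrightarrow H_{n+1}(X,X_n) \cong H_{n+1}.
\]
Pick lifts $\alpha_i\in\pi_{n+1}(X,X_n)$ of generators for each cyclic summand of $H_{n+1}$; their boundaries $\partial\alpha_i\in\pi_n(X_n)$ give maps $S^n\to X_n$ whose composites with $f_n$ are null. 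For a $\Z$-summand of $H_{n+1}$ this is the attaching map of an $(n+1)$-cell; for a $\Z/m$-summand, the relation $m\alpha_i=0$ in $\pi_{n+1}(X,X_n)$ forces $m\cdot\partial\alpha_i=0$ in $\pi_n(X_n)$, so $\partial\alpha_i$ extends over the Moore space $M_n(\Z/m)$. Assembling the pieces into a single map $k_n\colon M_n(H_{n+1})\to X_n$, I set $X_{n+1}:=C_{k_n}$.

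Homological triviality of $k_n$ is then automatic: each $\partial\alpha_i$ is null in $X$, and because $f_{n\ast}$ is already an isomorphism on $H_n$ the class $[\partial\alpha_i]$ vanishes in $H_n(X_n)$. The long exact sequence of the cofibration gives $H_j(X_{n+1})\cong H_j(X_n)$ for $j\le n$ and $H_{n+1}(X_{n+1})\cong H_n(M_n(H_{n+1}))=H_{n+1}$, confirming condition (1) at level $n+1$. The nullhomotopies of $f_n\circ k_n$ determined by the $\alpha_i$ extend $f_n$ to $f_{n+1}\colon X_{n+1}\to X$; the induction terminates at $n=N$, and since both spaces are simply connected and $f_N$ is a homology isomorphism, Whitehead's theorem identifies $X_N\simeq X$.

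The most delicate point will be verifying that $f_{n+1}$ induces the \emph{prescribed} isomorphism on $H_{n+1}$, not merely some automorphism of the group. This amounts to checking that the generator of $H_{n+1}(X_{n+1})$ arising from the cone on $\partial\alpha_i$ is sent by $(f_{n+1})_\ast$ to the homology class of $\alpha_i$ under the composition $\pi_{n+1}(X,X_n)\cong H_{n+1}(X,X_n)\cong H_{n+1}$; it rests on a diagram chase combining the Puppe sequence of $k_n$ with the boundary of the pair $(X,X_n)$ and the naturality of the Hurewicz map.
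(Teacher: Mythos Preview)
The paper does not supply its own proof of this lemma; it is simply quoted from Hatcher (Theorem~4H.3). Your inductive construction via relative Hurewicz and mapping cones is exactly the standard argument found there, so there is nothing to compare.

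One small imprecision worth tightening: you write $\pi_{n+1}(X,X_n)\twoheadrightarrow H_{n+1}(X,X_n)$, but the subsequent torsion step implicitly uses that this map is an \emph{isomorphism}. Indeed, to conclude $m\alpha_i=0$ in $\pi_{n+1}(X,X_n)$ from the fact that its image generates a $\Z/m$-summand of $H_{n+1}$, you need injectivity of the Hurewicz map, not just surjectivity. This is of course exactly what the relative Hurewicz theorem gives once you know the pair $(X,X_n)$ is $n$-connected with $X_n$ simply connected, so the argument goes through; just state the isomorphism rather than the surjection. A second cosmetic point: the lemma asserts $X_N=X$ as a subcomplex, whereas your construction yields $X_N\simeq X$ via Whitehead; if you want literal subcomplexes you can thread the mapping cylinders through at each stage, but for the paper's applications the homotopy statement is all that is used.
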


Note that we have the canonical inclusions $X^n\subseteq X_n\subseteq X^{n+1}$, where $X^k$ denotes the $k$-skeleton of $X$.
The map $k_n$ above is called the \emph{$n$-th $k'$-invariant}, and plays a key role in the homology decomposition of $X$. For instance, $k_n$ is null-homotopic if and only if $X_n\simeq X_{n-1}\vee M_n(H_nX)$.  

\begin{lemma}\label{lem:TrivChAct}
 Let $f\colon\vee_{i=1}^{m}A_i\to \vee_{j=1}^nB_j$ be a map which induces trivial homomorphism in cohomology groups with coefficients in abelian groups $G$ and $G'$. Let 
 \[f_{\jmath}=p_\jmath\circ f,\quad f_{\imath,\jmath}=f_{\jmath}\circ i_\imath=p_\jmath\circ f\circ i_\imath,\] 
 where $i_\imath\colon A_{\imath}\to \bigvee_{i=1}^{m}A_i$ and $p_\jmath\colon  \bigvee_{j=1}^nB_j\to B_\jmath$ are respectively the canonical inclusion and projection, $1\leq \imath\leq m,1\leq \jmath\leq n$. 
\begin{enumerate}
  \item\label{triv-cupprod} If $H^\ast(C_f;G)$ contains no non-trivial cup products, then so do $H^\ast(C_{f_{\jmath}};G)$ and $H^\ast(C_{f_{\imath,\jmath}};G)$, $\forall~\imath,\jmath$.
  \item\label{triv-CohOp} If the cohomology operation $\mathcal{O}\colon H^k(C_f;G)\to H^l(C_f;G')$
  is trivial, then so are the operations 
  \begin{align*}
    \mathcal{O}_\jmath&\colon H^k(C_{f_{\jmath}};G)\to H^l(C_{f_{\jmath}};G'),\\
    \mathcal{O}_{\imath\jmath}&\colon H^k(C_{f_{\imath\jmath}};G)\to H^l(C_{f_{\imath\jmath}};G').
  \end{align*}
  where $\mathcal{O}_\jmath$ and $\mathcal{O}_{\imath\jmath}$ are the cohomology operation of the same type as $\mathcal{O}$. 
\end{enumerate}

\begin{proof}
(1) The statement (\ref{triv-cupprod}) is due to \cite[Lemma 4.2]{ST19}. 

(2) By the proof of \cite[Lemma 4.2]{ST19}, for any integer $k\geq 0$ and any coefficient group $G$, there are monomorphisms 
\[d_\jmath^\ast\colon  H^k(C_{f_\jmath};G)\to H^k(C_f;G),\]
and epimorphisms 
\[d_{\imath\jmath}^\ast\colon H^k(C_{f_\jmath};G)\to H^k(C_{f_{\imath\jmath}^\ast};G).\]
Consider the following commutative diagrams:
\[\begin{tikzcd}
  H^k(C_f;G)\ar[d,"\mathcal{O}"]&\ar[l,"d_\jmath^\ast"swap,tail]H^k(C_{f_\jmath};G)\ar[d,"\mathcal{O}_\jmath"]\ar[r,"d_{\imath,\jmath}^\ast",two heads]&H^k(C_{f_{\imath,\jmath}};G)\ar[d,"\mathcal{O}_{\imath,\jmath}"]\\
  H^l(C_f;G')&\ar[l,"d_\jmath^\ast"swap,tail]H^l(C_{f_\jmath};G')\ar[r,"d_{\imath,\jmath}^\ast",two heads]&H^k(C_{f_{\imath,\jmath}};G')
\end{tikzcd}\]
It follows that $\mathcal{O}_\jmath$ is the restriction of $\mathcal{O}$, and $\mathcal{O}_{\imath\jmath}$ is induced by $\mathcal{O}_{\jmath}$. Thus if $\mathcal{O}$ is trivial, then so are $\mathcal{O}_{\jmath},\mathcal{O}_{\imath,\jmath}$.
\end{proof}
\end{lemma}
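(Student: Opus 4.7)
The plan is to reduce both statements to the fact that the maps $f_\jmath$ and $f_{\imath,\jmath}$ are obtained from $f$ by precomposition with a canonical inclusion and/or postcomposition with a canonical projection, so that cofibre-preserving maps relate the three mapping cones $C_f$, $C_{f_\jmath}$, $C_{f_{\imath,\jmath}}$. Concretely, the factorizations $f_\jmath=p_\jmath\circ f$ and $f_{\imath,\jmath}=f_\jmath\circ i_\imath$ fit into commutative squares whose vertical maps are $p_\jmath$ and $i_\imath$ respectively. The induced maps on cofibres give a map $d_\jmath\colon C_f\to C_{f_\jmath}$ (out of $f$, because $p_\jmath$ is applied to the target side) and a map $d_{\imath,\jmath}\colon C_{f_{\imath,\jmath}}\to C_{f_\jmath}$ (into $f_\jmath$, because $i_\imath$ is applied to the source side).

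Next I will analyze these two induced maps on cohomology under the hypothesis that $f$ (and hence each $f_\jmath$ and $f_{\imath,\jmath}$) is cohomologically trivial. Applying the Puppe sequence to each homotopy cofibration yields short exact sequences that split into $H^\ast(\vee B_j)\oplus H^{\ast-1}(\vee A_i)$, $H^\ast(B_\jmath)\oplus H^{\ast-1}(\vee A_i)$, and $H^\ast(B_\jmath)\oplus H^{\ast-1}(A_\imath)$ in the three cases. Because $p_\jmath^\ast$ is the inclusion of a wedge summand and $i_\imath^\ast$ is the projection onto a wedge summand, I expect to conclude that $d_\jmath^\ast$ is a monomorphism and $d_{\imath,\jmath}^\ast$ is an epimorphism at every degree and in every coefficient group (this is the content of \cite[Lemma 4.2]{ST19} that I will invoke).

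With these two facts in hand, part (1) follows by naturality of the cup product: any non-trivial cup product in $H^\ast(C_{f_\jmath};G)$ would, via the monomorphism $d_\jmath^\ast$, pull back to a non-trivial cup product in $H^\ast(C_f;G)$; and any non-trivial cup product in $H^\ast(C_{f_{\imath,\jmath}};G)$ would, via the epimorphism $d_{\imath,\jmath}^\ast$, be realized as the image of a non-trivial cup product in $H^\ast(C_{f_\jmath};G)$, contradicting triviality there. Part (2) follows identically by naturality of $\mathcal{O}$: commuting squares with mono vertical arrow force $\mathcal{O}_\jmath=0$ from $\mathcal{O}=0$, and commuting squares with epi vertical arrow force $\mathcal{O}_{\imath,\jmath}=0$ from $\mathcal{O}_\jmath=0$.

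The only real obstacle is verifying that $d_\jmath^\ast$ is injective and $d_{\imath,\jmath}^\ast$ is surjective; once that cofibre/Puppe-sequence bookkeeping is done, naturality of both cup products and cohomology operations does all the remaining work. Since the Puppe sequence splits in our situation (thanks to the cohomological triviality of $f$), this step is essentially the content of the cited lemma from So--Theriault and I would simply quote it rather than reprove it.
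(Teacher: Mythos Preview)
Your proposal is correct and follows essentially the same route as the paper: both invoke \cite[Lemma 4.2]{ST19} to obtain the monomorphism $d_\jmath^\ast\colon H^\ast(C_{f_\jmath})\to H^\ast(C_f)$ and the epimorphism $d_{\imath,\jmath}^\ast\colon H^\ast(C_{f_\jmath})\to H^\ast(C_{f_{\imath,\jmath}})$, then use naturality of cup products and cohomology operations in the resulting commutative squares to push triviality from $C_f$ down to $C_{f_\jmath}$ and $C_{f_{\imath,\jmath}}$. Your write-up simply makes the construction of the cofibre maps and the Puppe-sequence splitting more explicit than the paper does.
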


The following lemma is useful to determine the homotopy type of a suspension, see \cite[Lemma 6.4]{HL} or \cite[Lemma 5.6]{ST19}.
\begin{lemma}\label{lem:HL}
  Let $S\xra{f}(\bigvee_{i=1}^nA_i)\vee B \xra{g}\Sigma C$ be a homotopy fibration of simply-connected CW-complexes. Let $p_j\colon \bigvee_{i}A_i\to A_j$ be the canonical projection onto the wedge summand $A_j$, $j=1,\cdots, n$. Suppose that each composition
  \[f_j\colon S\xra{f}\bigvee_{i}A_i\xra{p_j}A_j\]
is null-homotopic, then there is a homotopy equivalence
\[\Sigma C\simeq \bigvee_{i=1}^nA_i\vee D,\]
where $D$ is the homotopy cofibre of the composition $S\xra{f}(\bigvee_{i}A_i)\vee B\xra{q_B}B$ with $q_B$ the obvious projection.
\end{lemma}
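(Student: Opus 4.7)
The plan is to split off the wedge $A := \bigvee_{i=1}^n A_i$ from the suspension $\Sigma C$ by constructing a retraction of the canonical inclusion $\iota_A := g\circ i_A\colon A\to \Sigma C$, and then exploiting the co-H space structure of $\Sigma C$ to produce the desired decomposition. This lets me sidestep trying to modify $f$ itself.

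First I would observe that for each $j$ the map $\tilde r_j := p_j\circ p_A\colon (\bigvee_i A_i)\vee B\to A_j$ satisfies $\tilde r_j\circ f = f_j\simeq \ast$ by hypothesis, so the cofibre property of $g$ produces extensions $r_j\colon \Sigma C\to A_j$ with $r_j\circ g\simeq \tilde r_j$. Writing $\mu_k\colon A_k\hookrightarrow A$ for the wedge inclusions, a direct unwinding gives $r_j\circ \iota_A\circ \mu_k = \delta_{jk}\,\mathrm{id}_{A_j}$, so each $r_j$ is already a retraction onto its wedge summand. Next I would assemble these into a single map $r_A := \mu_1\circ r_1 + \cdots + \mu_n\circ r_n\colon \Sigma C\to A$, formed via the suspension co-H structure on $\Sigma C$. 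Although precomposition need not distribute over this sum at the space level, the corresponding equation does hold on homology, so the previous computation yields $(r_A\circ \iota_A)_\ast = \mathrm{id}_{H_\ast(A)}$. Since every space involved is a simply connected CW complex, Whitehead's theorem upgrades $r_A\circ \iota_A$ to a homotopy equivalence; composing with its inverse turns $r_A$ into a genuine homotopy retraction of $\iota_A$.

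The cofibre of $\iota_A$ is easy to identify: since $\iota_A$ is a composite of cofibrations, $\mathrm{Cof}(\iota_A)\simeq \Sigma C/A$, and collapsing $A$ in $\Sigma C = \bigl((\bigvee_i A_i)\vee B\bigr)\cup_f \bm C S$ replaces the attaching map $f$ by $q_B\circ f = f_B$, giving $\Sigma C/A\simeq B\cup_{f_B}\bm C S = D$. Using the co-H structure on $\Sigma C$ a second time, I would form $\Phi\colon \Sigma C\to A\vee D$ as the sum $(j_A\circ r_A) + (j_D\circ \pi)$, where $\pi\colon \Sigma C\to D$ is the quotient map and $j_A,j_D$ are the wedge inclusions. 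The retraction $r_A$ supplies a splitting $H_\ast(\Sigma C)\cong H_\ast(A)\oplus H_\ast(D)$ in which $(r_A)_\ast$ and $\pi_\ast$ are the two projections, so $\Phi_\ast$ is an isomorphism; another application of Whitehead's theorem yields $\Sigma C\simeq A\vee D$.

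The key obstacle is that projection-wise null-homotopies $p_j\circ f\simeq \ast$ do \emph{not} in general force $p_A\circ f\simeq \ast$, because of possible Whitehead products in $\pi_\ast(\bigvee_i A_i)$; one cannot simply modify $f$ to factor through $B$ on the domain side and read off the splitting. The trick in the plan above is therefore to perform the entire splitting on the cofibre side, where the suspension co-H structure on $\Sigma C$ legitimizes the homological assembly $r_A$ and Whitehead's theorem converts the resulting homological splitting into an honest homotopy decomposition.
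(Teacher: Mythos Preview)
Your argument is correct. Note first that the paper does not actually supply a proof of this lemma: it merely cites \cite[Lemma~6.4]{HL} and \cite[Lemma~5.6]{ST19}, so there is no in-paper proof to compare against. (The printed statement also contains an evident typo---``homotopy fibration'' should read ``homotopy cofibration''---and you have interpreted it in the intended way.)

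On the substance: the strategy you outline---extend each $p_j\circ p_A$ over $\Sigma C$ using the cofibre property of $g$, assemble the resulting maps $r_j$ into a single $r_A\colon \Sigma C\to A$ via the co-$H$ structure on $\Sigma C$, verify $r_A\circ\iota_A$ is a homology isomorphism and hence a homotopy equivalence by Whitehead, then identify $\operatorname{Cof}(\iota_A)$ with $D$ by collapsing $A$ in the explicit mapping-cone model and split $\Sigma C\simeq A\vee D$ by a second co-$H$ assembly---is the standard proof, and is essentially the argument found in the cited references. Your explicit observation that the projection-wise null-homotopies do \emph{not} force $p_A\circ f\simeq\ast$ (because of possible Whitehead products), so that the splitting must be produced on the cofibre side rather than by modifying $f$, is exactly the point of the lemma; and your care in passing to homology when checking $r_A\circ\iota_A$ is the identity (since precomposition by $\iota_A$ need not distribute over the co-$H$ sum at the space level) is a subtlety that is sometimes elided.
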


Let $X=\Sigma X'$, $Y_i=\Sigma Y_i'$ be suspensions, $i=1,2,\cdots,n$. Let \[i_l\colon Y_l\to \bigvee_{j=i}^nY_i,\quad p_k\colon \bigvee_{i=1}^n Y_i\to Y_k\] be respectively the canonical inclusions and projections, $1\leq k,l\leq n$. By the Hilton-Milnor theorem,  we may write a map
\(f\colon X\to\bigvee_{i=1}^nY_i\)
as \[f=\sum_{k=1}^n i_k\circ f_{k}+\theta,\]
where $f_{k}=p_k\circ f\colon X\to Y_k$ and $\theta$ satisfies $\Sigma \theta=0$. The first part $\sum_{k=1}^n i_k\circ f_{k}$ is usually represented by a vector: 
\[u_f=(f_1,f_2,\cdots,f_n)^t.\]
We say that $f$ is completely determined by its components $f_k$ if $\theta=0$; in this case, denote $f=u_f$. Let $h=\sum_{k,l}i_lh_{lk}p_k$ be a self-map of $\bigvee_{i=1}^nY_i$ which is completely determined by its components $h_{kl}=p_k\circ h\circ i_l\colon Y_l\to Y_k$. Denote by   
\[M_h\coloneqq (h_{kl})_{n\times n}=\begin{pmatrix}
  h_{11}&h_{12}&\cdots&h_{1n}\\
  h_{21}&h_{22}&\cdots&h_{2n}\\
  \vdots&\vdots&\ddots&\vdots\\
  h_{n1}&h_{n1}&\cdots&h_{nn}
\end{pmatrix}\] 
Then the composition law
\(h(f+g)\simeq h f+h g\)
implies that the product
\[M_h(f_1,f_2,\cdots,f_n)^t\]
given by the matrix multiplication represents the composite $h\circ f$.
Two maps $f=u_f$ and $g=u_g$ are called \emph{equivalent}, denoted by 
\[(f_1,f_2,\cdots,f_n)^t\sim (g_1,g_2,\cdots,g_n)^t,\]
if there is a self-homotopy equivalence $h$ of $\bigvee_{i=1}^n Y_i$, which can be represented by the matrix $M_h$, such that 
\[M_h(f_1,f_2,\cdots,f_n)^t\simeq (g_1,g_2,\cdots,g_n)^t.\] 
Recall that the above matrix multiplication refers to elementary row operations in matrix theory; and note that the homotopy cofibres of the maps $f=u_f$ and $g=u_g$ are homotopy equivalent if $f$ and $g$ are equivalent.

The following lemma serves as an example of the above matrix method.
\begin{lemma}\label{lem:S4P4}
 Define $X$ by the homotopy cofibration 
 \[S^4\xra{(f_1,f_2\cdots,f_n)^t}\bigvee_{j=1}^nV_j\xra{}X,\] 
 where $f_j\colon S^4\to V_j$, $j=1,\cdots,n$.
 \begin{enumerate}
   \item\label{S4P4-1} If $V_j=S^3$ for $j=1,2\cdots,n$ and $f_{j_0}=\eta$ for some $j_0$, then there is a homotopy equivalence  
    \[X\simeq C^5_\eta\vee \bigvee_{j\neq j_0}S^3.\]
   \item\label{S4P4-2} If $V_j=P^4(2^{r_j})$ for $j=1,2\cdots,n$, and $f_j=i_3\eta$ for some $j$, then there is a homotopy equivalence 
   \[X\simeq C^5_{r_{j_1}}\vee \bigvee_{j\neq j_1} P^4(2^{r_j}),\] 
where $j_1=\max\{1\leq j\leq n~|~f_j=i_3\eta\}$.
 \end{enumerate}

 \begin{proof}
   (1) If there are a unique $f_{j_0}=\eta$, the statement clearly holds. We may assume that $f_1=\eta$, $f_i=\varepsilon_i\cdot \eta$, $\varepsilon_i\in\{0,1\}$. Then 
   \[\begin{pmatrix}
     1&0&\cdots&0\\
     -\varepsilon_2&1&\cdots &0\\
     \vdots&\vdots&\ddots&  \vdots\\
     -\varepsilon_n&0&\cdots&1
   \end{pmatrix}\begin{pmatrix}
    \eta\\\varepsilon_2\cdot\eta\\\vdots\\
    \varepsilon_n\cdot\eta
   \end{pmatrix}\simeq \begin{pmatrix}
     \eta\\0\\ \vdots\\
     0
   \end{pmatrix}.\]
It follows that there exists a self-homotopy equivalence $e_S$ of $\vee_{j=1}^mS^3$ such that 
\[e_S f\sim (\eta,0,\cdots,0)^t,\] 
and hence there is a homotopy equivalence  
\[X=C_f\simeq C_{e_Sf}\simeq C^5_\eta\vee\bigvee_{j=2}^mS^3.\]

(2) The statement clearly holds if there is a unique $j$ such that $f_j=i_3\eta$. Let $j_1$ be defined in the lemma. If there is an index $j_2$ such that 
\[f_{j_2}=i_3\eta\in \pi_4(P^4(2^{r_{j_2}})),\]
then the matrix multiplication
\[\begin{pmatrix}
  1_P&0\\
  -B(\chi^r_s)&1_P
\end{pmatrix}\begin{pmatrix}
  i_3\eta\\
  i_3\eta
\end{pmatrix}\simeq \begin{pmatrix}
  i_3\eta\\
  0
\end{pmatrix}\]
implies that $(f_{j_1},f_{j_2})^t\sim (f_{j_1},0)^t$.  
 By induction, it follows that there exists a self-homotopy equivalence $e_P$ of $\bigvee_{j=1}^m P^4(2^{r_j})$ such that 
 \[e_P\circ (f_1,f_2,\cdots,f_n)^t\simeq (0,\cdots,0,i_3\eta,0,\cdots,0)^t,\]
 where $i_3\eta$ in the latter vector lies in the $j_1$-th position. Thus we have a homotopy equivalence
\[X=C_f\simeq  C^5_{r_{j_1}}\vee \bigvee_{j\neq j_1}P^4(2^{r_j}).\]
 \end{proof}
\end{lemma}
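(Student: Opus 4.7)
The overall plan is to apply the matrix method from Section \ref{subsec:matrix}: write $f=(f_1,\dots,f_n)^t$, reduce it by left-multiplication by an invertible matrix representing a self-equivalence of the wedge $\bigvee_j V_j$, and then read off the homotopy cofibre. The first step is to identify the groups the $f_j$ live in. By Lemma \ref{lem:Moore-htpgrps}(2), $\pi_4(S^3)\cong\z{}\langle\eta\rangle$ and $\pi_4(P^4(2^r))\cong\z{}\langle i_3\eta\rangle$, so in both cases each component is either zero or the unique non-zero generator. Hence $f$ is already represented by a $\{0,1\}$-vector of $\eta$'s (or of $i_3\eta$'s).

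For part (\ref{S4P4-1}), write $f_j=\varepsilon_j\cdot\eta$ with $\varepsilon_j\in\{0,1\}$ and, without loss, $j_0$ at the first coordinate with $\varepsilon_{j_0}=1$. The elementary self-equivalence of $\bigvee_{j=1}^nS^3$ whose matrix is the identity except with entry $-\varepsilon_i$ in position $(i,1)$ for $i\neq j_0$ sends $(\varepsilon_1\eta,\dots,\varepsilon_n\eta)^t$ to $(\eta,0,\dots,0)^t$. Taking cofibres then yields $X\simeq C_\eta^5\vee\bigvee_{j\neq j_0}S^3$.

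For part (\ref{S4P4-2}) the same idea applies, but the self-equivalences of $\bigvee_j P^4(2^{r_j})$ on off-diagonal slots are represented by maps $\chi^{r}_{s}\colon P^4(2^r)\to P^4(2^s)$ from (\ref{eq:chi}). The crucial identity is $\chi^{r}_{s}\circ(i_3\eta)=i_3\eta$ when $r\geq s$. Thus, if $j_1$ is the distinguished index and $j_2\neq j_1$ is any other index with $f_{j_2}=i_3\eta$, left-multiplying by the elementary matrix with $-\chi^{r_{j_1}}_{r_{j_2}}$ in position $(j_2,j_1)$ replaces $f_{j_2}$ by $f_{j_2}-\chi^{r_{j_1}}_{r_{j_2}}\circ(i_3\eta)=0$, provided $r_{j_1}\geq r_{j_2}$. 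Iterating, the vector reduces to the one with a single $i_3\eta$ in position $j_1$, and the cofibre decomposes as $C^5_{r_{j_1}}\vee\bigvee_{j\neq j_1}P^4(2^{r_j})$.

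The main obstacle, and the reason $j_1$ is selected as the ``maximum'' such index, is precisely the asymmetry in (\ref{eq:chi}): cancellation of the $j_2$-th $i_3\eta$ via the $j_1$-th one works only when $r_{j_1}\geq r_{j_2}$. Under the standard convention $r_1\leq r_2\leq\cdots\leq r_n$ used elsewhere in the paper (e.g.\ Theorem \ref{thm:SM}), taking $j_1$ to be the largest index with $f_{j_1}=i_3\eta$ automatically guarantees this inequality for every competing $j_2$, and the iterative elimination goes through without further case analysis.
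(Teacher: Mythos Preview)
Your proof is correct and follows essentially the same route as the paper: identify the relevant homotopy groups as $\z{}$, represent $f$ by a $\{0,1\}$-vector, and use an elementary-matrix self-equivalence of the target wedge to reduce to a single nonzero entry. Your explicit remark that the cancellation $\chi^{r_{j_1}}_{r_{j_2}}\circ(i_3\eta)=i_3\eta$ requires $r_{j_1}\geq r_{j_2}$, and hence that the choice $j_1=\max\{j:f_j=i_3\eta\}$ only does its job under the ordering $r_1\leq\cdots\leq r_n$, is a clarification the paper's proof leaves implicit.
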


\section{Homology decomposition of $\Sigma M$}\label{sec:homolydecomp}

By (\ref{tab:M}) and \cite[Lemma 5.1]{ST19}, there is a homotopy equivalence
\begin{equation}\label{eq:SW}
  \Sigma M\simeq (\bigvee_{i=1}^mS^2)\vee \Sigma W,
\end{equation}
where $W$ is a CW-complex with integral homology given by the following table:
\begin{table}[H]
  \centering
  \begin{tabular}{cccccc}
    \toprule
    $i$&$0,4$& $1$&$2$&$3$&$\geq 5$\\
    \midrule
    $H_i(W)$&$\Z$&$T$&$\Z^d\oplus T$&$\Z^m$&$0$\\
    \bottomrule
  \end{tabular}
  \caption{$H_\ast(W;\Z)$}\label{tab:W}
\end{table}

By Lemma \ref{lem:hlgdecomp} and Table \ref{tab:W}, there are homotopy cofibrations
\begin{equation}\label{cof-k'inv}
  \begin{aligned}
     & \bigvee_{i=1}^dS^2\vee P^3(T)\xra{k_3}P^3(T)\to W_3,\\
  &\bigvee_{i=1}^mS^3\xra{k_4} W_3\to W_4,\quad S^5\xra{k_5}W_4\to \Sigma W,
  \end{aligned}
\end{equation}
where $k_3,k_4, k_5$ are homologically trivial maps.
Let $T_2=\bigoplus_{j=1}^{n}\z{r_j}$ be the $2$-primary component of $T$ and write $T=T_2\oplus T_{\neq 2}$. 
For each $k\geq 3$, there are homotopy equivalences (cf. \cite{Neisenbook})
\[P^k(T)\simeq P^k(T_2)\oplus P^k(T_{\neq 3})\simeq \big(\bigvee_{j=1}^{n}P^k(2^{r_j})\big)\vee P^k(T_{\neq 3}).\]

\begin{lemma}\label{lem:w_3}
  There is a homotopy equivalence 
  \[W_3\simeq \big(\bigvee_{i=1}^d S^3\big)\vee P^3(T)\vee P^4(T).\]
  \begin{proof}
    By (\ref{cof-k'inv}), there is a homotopy cofibration 
    \[\big(\bigvee_{i=1}^dS^2\big)\vee P^3(T)\xra{f}P^3(T)\to W_3,\] 
where $f$ is a homologically trivial map with its two components of the following types:
 \begin{align*}
  f_1^S&\colon \big(\bigvee_{i=1}^dS^2\big)\hookrightarrow \big(\bigvee_{i=1}^dS^2\big)\vee P^3(T)\xra{f}P^3(T);\\
  f_2^{T}&\colon P^3(T)\hookrightarrow \big(\bigvee_{i=1}^dS^2\big)\vee P^3(T)\xra{f}P^3(T).
 \end{align*}
 Here the arrows $``\hookrightarrow''$ denote the obvious inclusions. Clearly $f_1^S$ and $f_2^T$ are both homologically trivial.
Set $T=\bigoplus_{k=1}^lp_k^{r_k}$ with $p_k$ primes. Then the Hurewicz isomorphism $\pi_2(P^3(T))\cong H_2(P^3(T))$  implies that both $f_1^S$ and the composite   
\[S_{T}=\bigvee_{k=1}^lS^2\xra{j}P^3(T)\xra{f_2^T}P^3(T)\]
are null-homotopic,
where $j$ is the canonical inclusion.
Let 
\[m_T=\bigvee_kp_k^{r_k}\colon S_T\to S_T\]  be the attaching map of $P^3(T)$.
There is a homotopy commutative diagram of homotopy cofibrations (in which rows are columns are homotopy cofibrations):
\[\begin{tikzcd}
  S_T\ar[r]\ar[d,"m_T"]& \ast\ar[d]\ar[r]&\Sigma S_T\ar[d,"i_2\circ (\Sigma m_T)"]\\
  S_T\ar[r,"0"]\ar[d,"i"]&P^3(T)\ar[d,equal]\ar[r]& P^3(T)\vee \Sigma S_T\ar[d]\\
  P^3(T)\ar[r,"k_3^P"]&P^3(T)\ar[r]&C_{k_3^P}
\end{tikzcd}\]
It follows that
\[C_{k_3^P}\simeq P^3(T)\vee P^4(T),\]
and hence there is a homotopy equivalence 
\[W_3\simeq \bigvee_{i=1}^dS^3\vee C_{k_3^P}\simeq  \big(\bigvee_{i=1}^dS^3\big)\vee P^3(T)\vee P^4(T).\]
  \end{proof}
\end{lemma}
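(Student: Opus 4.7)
The plan is as follows. I would start from the homotopy cofibration (\ref{cof-k'inv}),
\[
\bigvee_{i=1}^d S^2 \vee P^3(T) \xra{k_3} P^3(T) \to W_3,
\]
and decompose the attaching map $k_3$ into its two wedge restrictions
\[
k^S \colon \bigvee_{i=1}^d S^2 \to P^3(T), \qquad k^P \colon P^3(T) \to P^3(T),
\]
both of which inherit homological triviality from $k_3$. Since $P^3(T)$ is simply connected, the Hurewicz theorem identifies $\pi_2\bigl(P^3(T)\bigr) \cong H_2\bigl(P^3(T)\bigr) = T$, which forces $k^S \simeq 0$ (each wedge component is a homologically trivial class in $\pi_2$) and likewise forces the restriction $k^P \circ j \simeq 0$, where $j \colon S_T \hookrightarrow P^3(T)$ is the inclusion of the bottom wedge of $2$-spheres.

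Because $k^S$ is null-homotopic, the cofibre of $k_3$ splits as $W_3 \simeq \bigvee_{i=1}^d S^3 \vee C_{k^P}$, and the remaining task is to identify the homotopy cofibre of $k^P$. For this I would set up the $3\times 3$ diagram
\[
\begin{tikzcd}
S_T\ar[r]\ar[d,"m_T"] & \ast\ar[d]\ar[r] & \Sigma S_T\ar[d,"g"]\\
S_T\ar[r,"0"]\ar[d,"i"] & P^3(T)\ar[d,equal]\ar[r] & P^3(T)\vee\Sigma S_T\ar[d]\\
P^3(T)\ar[r,"k^P"] & P^3(T)\ar[r] & C_{k^P}
\end{tikzcd}
\]
whose first column is the defining cofibration $S_T \xra{m_T} S_T \xra{i} P^3(T)$, whose second column is trivial, and whose first two rows are the cofibre sequences of the displayed (zero) maps. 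Commutativity of the bottom-left square is exactly the relation $k^P \circ i \simeq 0$ established above. The $3\times 3$ lemma for homotopy cofibrations then upgrades the right-hand column into a cofibration, with induced map $g \colon \Sigma S_T \to P^3(T) \vee \Sigma S_T$.

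A direct chase of the pushout construction identifies $g$ with the composite $\Sigma S_T \xra{\Sigma m_T} \Sigma S_T \hookrightarrow P^3(T) \vee \Sigma S_T$ into the second wedge summand. Since $g$ lands entirely in the $\Sigma S_T$ factor, the cofibre splits as
\[
C_{k^P} \simeq P^3(T) \vee \bigl(\Sigma S_T \cup_{\Sigma m_T} \bm C \Sigma S_T\bigr) \simeq P^3(T) \vee \Sigma P^3(T) = P^3(T) \vee P^4(T),
\]
and combining with the preceding wedge decomposition yields the desired equivalence $W_3 \simeq \bigvee_{i=1}^d S^3 \vee P^3(T) \vee P^4(T)$.

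The main obstacle is pinning down the induced map $g$ in the right-hand column: the $3\times 3$ lemma supplies a cofibration for free, but one still has to verify by chasing the pushout construction that $g$ factors through the $\Sigma S_T$ summand via $\Sigma m_T$. Once this factorization is in hand, the rest is a routine combination of Hurewicz with the general principle that a null restriction of a wedge-shaped attaching map to a summand splits off the corresponding suspension summand from the cofibre.
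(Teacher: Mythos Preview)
Your proposal is correct and follows essentially the same route as the paper: split $k_3$ into its sphere and Moore-space components, kill the sphere part via the Hurewicz isomorphism, and then run the same $3\times 3$ cofibration diagram to identify $C_{k^P}\simeq P^3(T)\vee P^4(T)$. The paper simply asserts that the induced map on the right column is $i_2\circ m_T$, whereas you flag the identification of $g$ as the one step requiring care; apart from this expository difference and notation, the arguments coincide.
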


\begin{lemma}\label{lem:W_4}
  There is a homotopy equivalence 
	\[W_4\simeq \big(\bigvee_{i=1}^dS^3\big)\vee P^4(T)\vee C_{g_2}\]
	for some homologically trivial map $g_2\colon \bigvee_{i=1}^mS^3\to P^3(T)$. Moreover, there is a homotopy equivalence 
  \[\Sigma W_4\simeq \big(\bigvee_{i=1}^dS^4\big)\vee P^4(T)\vee P^5(T)\vee \bigvee_{i=1}^mS^5.\]
  \begin{proof}
   By (\ref{cof-k'inv}) and Lemma \ref{lem:w_3}, there is a homotopy cofibration  
\[\bigvee_{i=1}^mS^3\xra{g} \big(\bigvee_{i=1}^dS^3\big)\vee P^3(T)\vee P^4(T)\to W_4\]
with $g$ a homologically trivial map. $g$ is determined by the following components  
\begin{align*}
 g_1\colon & S^3\to\bigvee_{i=1}^mS^3\xra{g} \big(\bigvee_{i=1}^dS^3\big)\vee P^3(T)\vee P^4(T)\to \bigvee_{i=1}^dS^3\to S^3,\\
 g_2\colon &  S^3\to\bigvee_{i=1}^mS^3\xra{g} \big(\bigvee_{i=1}^dS^3\big)\vee P^3(T)\vee P^4(T)\to P^3(T),\\
 g_3\colon &S^3\to\bigvee_{i=1}^mS^3\xra{g} \big(\bigvee_{i=1}^dS^3\big)\vee P^3(T)\vee P^4(T)\to P^4(T).
\end{align*}
Here the unlabelled maps are the obvious inclusions and projections.
$g_1,g_2,g_3$ are all homologically trivial. The Hurewicz theorem implies that both $g_1$ and $g_3$ are null-homotopic. Then by Lemma \ref{lem:HL} we get the first statement. 

To prove the second homotopy equivalence, it suffices to show that if 
$f\colon S^4\to P^4(T)$ is homologically trivial, then $f$ is null-homotopic.  Consider the following homologically trivial components of $f$
\begin{align*}
  f_1\colon &S^4\xra{f}P^4(T)\to P^4(T_{\neq 2}),\\
  f_2^j\colon&S^4\xra{f}P^4(T)\to P^4(T_2)\to P^4(2^{r_j}),~j=1,2,\cdots,n.
\end{align*}
$f_1$ is clearly null-homotopic, because $\pi_4(P^4(p^r))=0$ for odd primes $p$.
Observe that $W_4=\Sigma W^4$ is a suspension, the Steenrod square $\sq^2$ acts trivially on $H^2(W_4;\z{})$. By Lemma \ref{lem:TrivChAct} (\ref{triv-CohOp}), $\sq^2$ acts trivially on $H^3(C_{f_2^j};\z{})$. Since $\pi_4(P^4(2^{r_j}))\cong \z{}\langle i_3\eta\rangle$ (Lemma \ref{lem:Moore-htpgrps}), we may set \[f_2^j=\varepsilon_j\cdot i_3\eta, \quad \varepsilon_j\in\z{}.\] 
Note that the homotopy cofibre of $i_3\eta\in\pi_4(P^4(2^{r_j}))$ is the  Chang-complex $C^5_{r_{j}}$, by Lemma \ref{lem:StSq-Changcpx} we then get that $\varepsilon_j=0$, or equivalently $f_2^j$ is null-homotopic for each $j=1,2,\cdots,n$. Thus $f$ is null-homotopic, by Lemma \ref{lem:HL}.
  \end{proof}
\end{lemma}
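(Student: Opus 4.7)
My plan is to combine the homotopy cofibration (\ref{cof-k'inv}) with Lemma \ref{lem:w_3} and then peel off summands using the matrix calculus of Section \ref{subsec:matrix} together with Lemma \ref{lem:HL}. Substituting the decomposition of $W_3$ into the second cofibration of (\ref{cof-k'inv}) yields a homotopy cofibration
\[\bigvee_{i=1}^m S^3 \xra{g} \big(\bigvee_{i=1}^d S^3\big) \vee P^3(T) \vee P^4(T) \to W_4,\]
in which $g$ is homologically trivial. I would decompose $g$ into its three components $g_1, g_2, g_3$ landing in the three wedge summands; each is itself homologically trivial.

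First I would show that $g_1$ and $g_3$ are null-homotopic. On each wedge factor of the source, $g_1 \colon \bigvee_{i=1}^m S^3 \to \bigvee_{i=1}^d S^3$ is determined by an element of $\pi_3(S^3) \cong \Z$, which is detected by the Hurewicz homomorphism, so homological triviality forces $g_1 \simeq \ast$. For $g_3 \colon \bigvee_{i=1}^m S^3 \to P^4(T)$, the target is $2$-connected, so Hurewicz gives $\pi_3(P^4(T)) \cong H_3(P^4(T)) = 0$, and hence $g_3 \simeq \ast$. Applying Lemma \ref{lem:HL} then splits off the $\bigvee S^3$ and $P^4(T)$ summands and produces
\[W_4 \simeq \big(\bigvee_{i=1}^d S^3\big) \vee P^4(T) \vee C_{g_2},\]
with $g_2 \colon \bigvee_{i=1}^m S^3 \to P^3(T)$ the remaining essential component.

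For the suspended statement, it suffices to show that $\Sigma g_2 \colon \bigvee_{i=1}^m S^4 \to P^4(T)$ is null-homotopic, for then $\Sigma C_{g_2} \simeq P^4(T) \vee \bigvee_{i=1}^m S^5$, and suspending the first decomposition gives the claim. Using the primary decomposition $P^4(T) \simeq P^4(T_{\neq 2}) \vee \bigvee_{j=1}^n P^4(2^{r_j})$, each wedge-to-wedge component of $\Sigma g_2$ lands in a single summand. The components targeting $P^4(T_{\neq 2})$ are null because $\pi_4(P^4(p^r)) = 0$ for odd primes $p$, as recorded in Section \ref{sec:Moore}. A component $f \colon S^4 \to P^4(2^{r_j})$ lies in $\pi_4(P^4(2^{r_j})) \cong \z{}\langle i_3\eta\rangle$ by Lemma \ref{lem:Moore-htpgrps}(2), so its only possible nonzero value is $i_3\eta$.

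The principal obstacle is ruling out $f = i_3\eta$, which I would handle by a cohomological obstruction. If $f$ were essential, then its homotopy cofibre would be $C^5_{r_j}$, on which Lemma \ref{lem:StSq-Changcpx} asserts that $\sq^2 \colon H^3(C^5_{r_j}; \z{}) \to H^5(C^5_{r_j}; \z{})$ is an isomorphism. On the other hand, one may arrange the homology decomposition of $\Sigma W$ so that the section $W_4$ is itself a suspension; the reduced cup product then vanishes on $\widetilde{H}^\ast(W_4; \z{})$, so $\sq^2 \colon H^2(W_4; \z{}) \to H^4(W_4; \z{})$ is trivial. By stability of $\sq^2$ and the suspension isomorphism, $\sq^2$ also vanishes on $H^3(\Sigma W_4; \z{})$, and hence on the wedge summand $\Sigma C_{g_2}$. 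Lemma \ref{lem:TrivChAct}(\ref{triv-CohOp}), applied to the components of $\Sigma g_2$, transports this triviality to the cofibre of each component, contradicting the nontrivial $\sq^2$ on $C^5_{r_j}$. Thus every component of $\Sigma g_2$ is null, $\Sigma g_2 \simeq \ast$, and the second decomposition follows.
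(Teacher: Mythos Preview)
Your approach is essentially identical to the paper's: decompose $g$ into its three components, kill $g_1$ and $g_3$ via Hurewicz, apply Lemma \ref{lem:HL}, and for the suspended statement rule out $i_3\eta$ components by transporting the vanishing of $\sq^2$ from the suspension $W_4$ to the component cofibres via Lemma \ref{lem:TrivChAct}. Your treatment of the dimension shift (passing from $\sq^2$ on $H^2(W_4;\z{})$ to $\sq^2$ on $H^3(\Sigma W_4;\z{})$ via stability) is in fact more explicit than the paper's.

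There is one slip. You write that $P^4(T)$ is $2$-connected, ``so Hurewicz gives $\pi_3(P^4(T)) \cong H_3(P^4(T)) = 0$''. The isomorphism is correct, but the group is not zero: by the paper's convention $P^4(\Z/k)=S^3\cup_k e^4$, so $H_3(P^4(T))\cong T$. The conclusion $g_3\simeq\ast$ still holds, for the right reason: the Hurewicz map $\pi_3(P^4(T))\to H_3(P^4(T))$ is an isomorphism, and $g_3$ is homologically trivial, so each component of $g_3$ in $\pi_3(P^4(T))$ vanishes. This is precisely the argument the paper invokes (``the Hurewicz theorem implies that both $g_1$ and $g_3$ are null-homotopic''), and with this correction your proof is complete and matches the paper's.
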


\section{Proofs of Theorem \ref{thm:S2M} and \ref{thm:SM}}\label{sect:proofs}

By (\ref{cof-k'inv}) and Lemma \ref{lem:W_4}, there is a homotopy cofibration 
\[S^5\xra{h}\big(\bigvee_{i=1}^dS^4\big)\vee P^4(T)\vee P^5(T)\vee \bigvee_{i=1}^mS^5\to \Sigma^2 W,\]
where $h$ a homologically trivial map, $T\cong T_2\oplus T_{\neq 2}$ with $T_2\cong \bigoplus_{j=1}^{n}\z{r_j}$. Since $\pi^5(P^4(p^r))=\pi_5(P^5(p^s))=0$ for any odd primes $p$, Lemma \ref{lem:HL} indicates that there is a homotopy equivalence 
\begin{equation}\label{eq:S2W}
  \Sigma^2 W\simeq P^4(T_{\neq 2})\vee P^5(T_{\neq 2})\vee  \big(\bigvee_{i=1}^mS^5\big)\vee C_\varphi,
\end{equation}
where $\varphi\colon S^5\to \big(\bigvee_{i=1}^dS^4\big)\vee P^4(T_2)\vee P^5(T_2)$ is a homologically trivial map. $\varphi$ has the following three types of components: 
\begin{align*}
  \varphi_1&\colon S^5\xra{\varphi}\big(\bigvee_{i=1}^dS^4\big)\vee P^4(T_2)\vee P^5(T_2)\to S^4,\\
  \varphi_2^j&\colon S^5\xra{\varphi} \big(\bigvee_{i=1}^dS^4\big)\vee P^4(T_2)\vee P^5(T_2)\to P^4(T_2)\to P^4(2^{r_i}),\\
  \varphi_3^j&\colon S^5\xra{\varphi}\big(\bigvee_{i=1}^dS^4\big)\vee P^4(T_2)\vee P^5(T_2)\to P^5(T_2)\to P^5(2^{r_i}),
\end{align*}
where $j=1,2,\cdots,n$ and the unlabelled maps are the obvious projections.

\begin{proposition}\label{prop:S2W-Spin}
 If $\sq^2\big(H^4(\Sigma^2 W;\z{})\big)=0$, then the homotopy type of $\Sigma^2 W$ is determined by the secondary operation $\Theta$ (\ref{eq:Theta}) and the higher Bockstein $\beta_r$.  Explicitly, if  $\Theta\big(H^3(C_\varphi;\z{})\big)=0$, then there is a homotopy equivalence
  \[C_{\varphi}\simeq  \big(\bigvee_{i=1}^dS^4\big)\vee P^4(T_2)\vee P^5(T_2)\vee S^6  ;\] 
otherwise we have
\[C_{\varphi}\simeq \big(\bigvee_{i=1}^dS^4\big)\vee P^4\big(\frac{T_2}{\z{r_{j_0}}}\big)\vee P^5(T_2)\vee  A^{6}(2^{r_{j_0}}\eta^2),  \]
where $j_0$ is the maximum of the indices $j$ satisfying 
\[\Theta(x)\neq 0,\beta_{r_j}(x)\neq 0,~x\in H^3(C_\varphi;\z{}).\]
  \begin{proof}
    By assumption and (\ref{eq:S2W}), $\sq^2$ acts trivially on $H^4(C_{\varphi};\z{})$, and hence so does $\sq^2$ on $H^4(C_{\varphi_1};\z{})$, $H^4(C_{\varphi_k^j};\z{})$ for each $k=2,3$ and $j=1,2,\cdots,n$, by Lemma \ref{lem:TrivChAct} (\ref{triv-CohOp}).
 It follows by Lemma \ref{lem:StSq-Changcpx} and \ref{lem:StSq-C_r} that $\varphi_1$, $\varphi_3^i$ are null-homotopic, and 
 \[\varphi_2^{j}=y_j\cdot i_3\eta^2,\quad y_j\in\z{},~j=1,2,\cdots,n.\] 
 By Lemma \ref{lem:eta2}, the coefficients $y_j$ can be detected by the secondary operation $\Theta$. There are possibly many such indices $j$, however, similar argumets to that in the proof of Lemma \ref{lem:S4P4} show that there exists a homotopy equivalence $e$ of $P^4(T_2)$ such that 
 \[e(\varphi_2^1,\varphi_2^2,\cdots,\varphi_2^{n})^t\simeq (0,\cdots,0,\varphi_2^{j_0}=i_3\eta^2,0,\cdots,0)^t\]
with $j_0$ described in the proposition. The proof then completes by 
applying Lemma \ref{lem:HL}.
  \end{proof}
\end{proposition}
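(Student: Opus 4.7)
The plan is to analyze the attaching map $\varphi$ component-by-component, constrain each piece via the $\sq^2$ hypothesis together with the cohomology operation lemmas of Section~\ref{sect:cohOp}, and finally reassemble using Lemma~\ref{lem:HL} and the matrix method of Section~\ref{subsec:matrix}.

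First I would apply Lemma~\ref{lem:TrivChAct}(\ref{triv-CohOp}) to transport the vanishing $\sq^2(H^4(\Sigma^2W;\z{}))=0$ onto each component cofibre $C_{\varphi_1}$, $C_{\varphi_2^j}$, $C_{\varphi_3^j}$. For $\varphi_1\in\pi_5(S^4)=\z{}\langle\eta\rangle$: its nontrivial cofibre $C^6_\eta$ carries nontrivial $\sq^2$ by Lemma~\ref{lem:StSq-Changcpx}, so $\varphi_1\simeq 0$. For $\varphi_3^j\in\pi_5(P^5(2^{r_j}))=\z{}\langle i_4\eta\rangle$ (Lemma~\ref{lem:Moore-htpgrps}(2)): its cofibre is the elementary Chang complex $C^6_{r_j}$, again with nontrivial $\sq^2$, forcing $\varphi_3^j\simeq 0$. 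For $\varphi_2^j\in\pi_5(P^4(2^{r_j}))$: Lemma~\ref{lem:Moore-htpgrps}(3) decomposes this group into summands generated by $\tilde{\eta}_{r_j}$ and $i_3\eta^2$, and Lemma~\ref{lem:StSq-C_r} rules out the $\tilde{\eta}_{r_j}$-summand via nontriviality of $\sq^2$ on $A^6(\tilde{\eta}_{r_j})$. Hence $\varphi_2^j=y_j\cdot i_3\eta^2$ for some $y_j\in\z{}$.

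Next I would detect the $y_j$'s via Lemma~\ref{lem:eta2}: $y_j=1$ iff the secondary operation $\Theta$ is nontrivial on the summand of $H^3(C_\varphi;\z{})$ corresponding to $\z{r_j}\subset T_2$. If all $y_j=0$, then $\varphi$ is componentwise null-homotopic and Lemma~\ref{lem:HL} immediately yields the first decomposition with an $S^6$ summand. If some $y_j=1$, I would mimic the matrix reduction of Lemma~\ref{lem:S4P4}(\ref{S4P4-2}): from \eqref{eq:chi} together with $2\eta^2=0$ in the stable range, one computes $\chi^{r_s}_{r_t}(i_3\eta^2)=i_3\eta^2$ when $r_s\geq r_t$ and $=0$ otherwise; hence composing appropriate $\chi^{r_j}_{r_{j_0}}$'s gives a self-equivalence of $P^4(T_2)$ reducing the column vector $(y_1\cdot i_3\eta^2,\ldots,y_n\cdot i_3\eta^2)^t$ to one with a single nonzero entry $i_3\eta^2$ in position $j_0$, where $j_0$ is the largest index with $y_{j_0}\neq 0$. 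By Lemma~\ref{lem:Bockstein}(3) the higher Bockstein $\beta_{r_{j_0}}$ is the unique one detecting the $\z{r_{j_0}}$-summand, which pins $j_0$ down invariantly via the pair $(\Theta,\beta_{r_j})$. Applying Lemma~\ref{lem:HL} then completes the second decomposition with $A^6(2^{r_{j_0}}\eta^2)$ as a wedge summand.

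The principal obstacle will be the final matrix-reduction step: one must verify that the invariantly defined $j_0$ from the statement (the maximum $j$ with $\Theta(x)\neq 0$ and $\beta_{r_j}(x)\neq 0$) agrees with the index surviving the reduction. This amounts to checking that the $\chi$-interaction rule above is consistent across different orderings in $T_2=\bigoplus\z{r_j}$, and that the higher Bockstein, as described by Lemma~\ref{lem:Bockstein}(3), genuinely witnesses the $\z{r_{j_0}}$-factor in $H^\ast(C_\varphi;\z{})$, so that $j_0$ depends only on the homotopy type of $C_\varphi$ and not on the particular reduction performed.
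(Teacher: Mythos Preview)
Your proposal is correct and follows essentially the same route as the paper's proof: transport the $\sq^2$ vanishing to the component cofibres via Lemma~\ref{lem:TrivChAct}, kill $\varphi_1$ and $\varphi_3^j$ using Lemmas~\ref{lem:StSq-Changcpx} and~\ref{lem:StSq-C_r}, reduce $\varphi_2^j$ to a multiple of $i_3\eta^2$, detect the coefficients via $\Theta$ (Lemma~\ref{lem:eta2}), and then perform the matrix reduction of Lemma~\ref{lem:S4P4} before applying Lemma~\ref{lem:HL}. The paper's argument is the same, only more terse; in particular, it too defers the reduction step to ``similar arguments to that in the proof of Lemma~\ref{lem:S4P4}'' and does not spell out the invariance of $j_0$ that you flag as the principal obstacle.
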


\begin{proposition}\label{prop:S2W-NSpin}
  If $\sq^2\big(H^4(\Sigma^2 W;\z{})\big)\neq 0$ and $\Theta\big(H^3(C_\varphi;\z{})\big)=0$, then the homotopy type of $\Sigma^2 W$
can be characterized as follows.
\begin{enumerate}[1.]
  \item\label{S2W-1} Suppose that for any $u\in H^4(\Sigma^2 M;\z{})$ with $\sq^2(u)\neq 0$ and any $v\in \ker(\sq^2)$, there hold
  \[
    \beta_r(u+v)=0,\quad u+v\notin \im(\beta_s),\quad \forall~r,s\geq 1,\]
    then there is a homotopy equivalence 
    \[C_{\varphi}\simeq \big(\bigvee_{i=1}^{d-1}S^4\big)\vee P^5(T_2)\vee P^4(T_2)\vee C^6_\eta.\]
    \item\label{S2W-2} Suppose that for any $u\in H^4(\Sigma^2 M;\z{})$ with $\sq^2(u)\neq 0$ and any $v\in \ker(\sq^2)$, there hold 
    \[ u+v\notin \im(\beta_s),  ~\forall~s\geq 1,\] 
     while there exist $u'\in H^4(\Sigma^2 M;\z{})$ with $\sq^2(u')\neq 0$ and $v'\in \ker(\sq^2)$ such that 
     \[\beta_r(u'+v')\neq 0\text{ for some }r\geq 1.\]
    Then there is a homotopy equivalence 
    \[C_{\varphi}\simeq \big(\bigvee_{i=1}^dS^4\big)\vee P^5(\frac{T_2}{\z{r_{j_1}}})\vee P^4(T_2)\vee C^6_{r_{j_1}}\]
    with $j_1$ the maximum of indices $j$ such that  
    \[\sq^2(u')\neq 0, \beta_{r_{j_1}}(u'+v')\neq 0, u'\in H^4(\Sigma^2())  .\]
    
    \item\label{S2W-3} Suppose that there exist $u\in H^4(\Sigma^2 M;\z{})$ with $\sq^2(u)\neq 0$ and $v\in \ker(\sq^2)$ such that \[u+v\in \im(\beta_r)\text{ for some $r\geq 1$},\] 
     then there is a homotopy equivalence 
     \[C_{\varphi}\simeq \big(\bigvee_{i=1}^dS^4\big)\vee P^5(T_2)\vee P^4(\frac{T_2}{\z{r_{j_2}}})\vee A^6(\tilde{\eta}_{r_{j_2}})\]
     with $j_2$.
\end{enumerate}

\begin{proof}
 By the Hilton-Milnor theorem,  Lemma \ref{lem:Moore-htpgrps} and \ref{lem:eta2}, and the assumption that $\Theta\big(H^3(C_{\varphi};\z{})\big)=0$, we can put 
  \begin{equation}\label{eq:varphi}
    \varphi=\sum_{j=1}^d x_i\cdot \eta+\sum_{j=1}^{n}y_j\cdot i_4\eta+\sum_{k=1}^{n}z_{k}\cdot \tilde{\eta}_{r_{k}}+\theta,
  \end{equation}
  where $\theta$ is a linear combination of Whihtehead products in $\pi_5(P^4(T_2))$.

  By Lemma \ref{lem:StSq-Changcpx} and \ref{lem:StSq-C_r}, we see that at least one of these coefficients $x_i,y_j,z_k$ is non-zero.

  (1) Under  the conditions in (\ref{S2W-1}), we deduce from Lemma \ref{lem:Bockstein} that $u$ comes from a free integral homology class. It follows that 
  \[y_{j}=z_{k}=0,\text{ and } x_i=1 \text{~for some $i$}\]
  in the expression (\ref{eq:varphi}).
  By Lemma \ref{lem:S4P4} (\ref{S4P4-1}), we may assume that there is exactly one $j$ such that $x_j=1$. Thus by Lemma \ref{lem:HL}, we get the homotopy equivalence in (\ref{S2W-1}).
  
  (2) The arguments are similar to (1), the conditions (\ref{S2W-2}) implies that 
  \[x_i=z_k=0, \text{ and }y_j=1 \text{ for some }j,\] 
  while Lemma \ref{lem:S4P4} (\ref{S4P4-2}) guarantees that we may assume that there is exactly one such $j$, which equals to $j_1$ described in the proposition. The homotopy equivalence in (\ref{S2W-2}) then follows by Lemma \ref{lem:HL}.
  
  (3)  The conditions (\ref{S2W-3}) imply that 
  \[z_{k}\equiv 1 \pmod 2\text{ for some }k.\] 
   By Lemma \ref{lem:S4P4}, we may firstly assume that 
  \begin{align*}
    x_1=\epsilon\in\z{}, &\quad x_i=0\text{ for $i>1$};\\
  y_{j_0}=\varepsilon\in\z{},&\quad y_j=0\text{ for $j\neq j_0$}.
  \end{align*}
  Note that $(\epsilon,\varepsilon)\neq (1,1)$, because 
  \[\begin{pmatrix}
    \eta\\
    i_4\eta
  \end{pmatrix}\sim \begin{pmatrix}
    \eta\\
    0
  \end{pmatrix}\colon S^5\to S^4\vee P^5(2^r).\]
   %After taking suspension, the component $\theta$ disappears and all maps are suspensions, so that we can apply elementary  matrix operations.
  By the relation $q_4\tilde{\eta}_{r_k}=\eta$ in (\ref{eq:eta_r}), we have  
  \[
   \begin{pmatrix}
     \eta\\
     \tilde{\eta}_{r_k}
   \end{pmatrix}\sim \begin{pmatrix}
    0\\
    \tilde{\eta}_{r_k}
  \end{pmatrix}, \quad \begin{pmatrix}
    i_4\eta\\
    \tilde{\eta}_{r_k}
  \end{pmatrix}\sim \begin{pmatrix}
   0\\
   \tilde{\eta}_{r_k}
  \end{pmatrix}.\]
  It follows that $z_k\equiv 1\pmod 2$ implies that $\epsilon=\varepsilon=0$.
   On the other hand, Corollary \ref{cor:chi-eta} implies that 
  \[\begin{pmatrix}
    \tilde{\eta}_r\\
    \tilde{\eta}_s
  \end{pmatrix}\sim \begin{pmatrix}
    \tilde{\eta}_r\\
    0
  \end{pmatrix} \text{ for $r\leq s$}.\] 
  Thus up to homotopy we may assume that $x_i=y_j=0$ and there exists exactly one $z_{k_0}\equiv 1\pmod 2$ with $k_0$ described in the proposition. 
  Then we get the homotopy equivalence in (\ref{S2W-3}) by Lemma \ref{lem:HL}.
\end{proof}
\end{proposition}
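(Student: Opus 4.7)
The plan is to analyze the attaching map $\varphi$ through its Hilton--Milnor decomposition. Using Lemma \ref{lem:Moore-htpgrps}, I would identify the $2$-primary generators of $\pi_5$ of each wedge summand: $\eta\in\pi_5(S^4)$ on the free summands, $\tilde{\eta}_{r_k}\in\pi_5(P^4(2^{r_k}))$, and $i_4\eta\in\pi_5(P^5(2^{r_j}))$ on the Moore summands. This writes $\varphi$ in the form (\ref{eq:varphi}) with coefficients $x_i,y_j,z_k\in\z{}$ modulo a Whitehead-product remainder $\theta$ that dies under one further suspension and so does not affect the cofibre (we are already in $\Sigma^2 W$). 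The hypothesis $\Theta(H^3(C_\varphi;\z{}))=0$ rules out the $\eta^2$-type contributions already handled in Proposition \ref{prop:S2W-Spin}.

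The second step is to translate the cohomological conditions of each case into vanishing/non-vanishing conditions on $x_i,y_j,z_k$. For each of the three elementary $6$-complexes $C^6_\eta$, $C^6_{r}$, $A^6(\tilde{\eta}_{r})$, I would compute the $\sq^2$- and $\beta_r$-pattern directly: on $C^6_\eta$ the $\sq^2$-source class is free so no Bockstein acts (Lemma \ref{lem:Bockstein}(2)); on $C^6_{r}$ the $\sq^2$-source is detected by $\beta_r$ but lies in no $\im(\beta_s)$ because $H^3=0$; on $A^6(\tilde{\eta}_{r})$ the $\sq^2$-source actually lies in $\im(\beta_r)$, as $\beta_r$ maps $H^3$ onto $H^4$ of the $P^4(2^r)$-subcomplex. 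Combined with Lemma \ref{lem:TrivChAct}(\ref{triv-CohOp}), which transfers cohomology-operation data from individual summands to $C_\varphi$, this forces exactly the distributions claimed: only some $x_i$'s can survive in (\ref{S2W-1}), only some $y_j$'s in (\ref{S2W-2}), and at least one $z_k$ must be odd in (\ref{S2W-3}).

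The third step is a matrix reduction over $(\bigvee S^4)\vee P^4(T_2)\vee P^5(T_2)$ using the method of Section \ref{subsec:matrix}. Cases (\ref{S2W-1}) and (\ref{S2W-2}) are direct applications of Lemma \ref{lem:S4P4}(\ref{S4P4-1}) and (\ref{S4P4-2}) respectively, with the latter adapted from $P^4$ to $P^5$ by the same $\chi^r_s$-argument, isolating a single (maximal) index. For case (\ref{S2W-3}) I would additionally use the identity $q_4\tilde{\eta}_{r_k}=\eta$ from (\ref{eq:eta_r}) to absorb any surviving $x_i\cdot\eta$ and $y_j\cdot i_4\eta$ into the $\tilde{\eta}_{r_k}$-row by row operations across the three wedge pieces, and then reduce to a single $z_{j_2}$ using Corollary \ref{cor:chi-eta}. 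Once only one non-zero component remains, Lemma \ref{lem:HL} splits off every wedge summand on which the reduced $\varphi$ is null-homotopic, leaving precisely $C^6_\eta$, $C^6_{r_{j_1}}$, and $A^6(\tilde{\eta}_{r_{j_2}})$ as the non-trivial pieces.

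The main obstacle I expect is case (\ref{S2W-3}): the cross-summand absorption via $q_4\tilde{\eta}_{r_k}=\eta$ has to be performed simultaneously across $\bigvee S^4$, $P^4(T_2)$, and $P^5(T_2)$, and I must check that the resulting matrix genuinely represents a self-homotopy equivalence of the target (rather than merely a self-map with the right cohomological effect). Tied to this is the asymmetry in the choice of extremal index---maximum $j_1$ in (\ref{S2W-2}) versus minimum $j_2$ in (\ref{S2W-3})---which is dictated by the opposite directions of the relations in Corollary \ref{cor:chi-eta} for $r\leq s$ versus $r\geq s$; these conventions must be tracked carefully to guarantee that the extremal index produced by the matrix reduction matches the one in the statement.
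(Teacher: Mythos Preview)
Your proposal is correct and follows essentially the same approach as the paper's proof: Hilton--Milnor decomposition of $\varphi$ into the form (\ref{eq:varphi}), elimination of the $i_3\eta^2$-terms via the hypothesis on $\Theta$, translation of the Bockstein conditions of Lemma \ref{lem:Bockstein} into constraints on the coefficients $x_i,y_j,z_k$, matrix reduction via Lemma \ref{lem:S4P4} and Corollary \ref{cor:chi-eta}, and the final splitting via Lemma \ref{lem:HL}. Your anticipation of the cross-summand absorption in case (\ref{S2W-3}) through $q_4\tilde{\eta}_{r_k}=\eta$ and of the max/min index asymmetry matches the paper's argument exactly.
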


\begin{proof}[Proof of Theorem \ref{thm:S2M}]
  It is well-known that a closed,  smooth, connected, orientable $4$-manifold $M$ is spin if and only if the Steenrod square $\sq^2$ acts trivially on $H^2(M;\z{})$. The homotopy types of $\Sigma^2M$ in Theorem \ref{thm:S2M} then are obtained by (\ref{eq:SW}, \ref{eq:S2W}), Proposition \ref{prop:S2W-Spin} and \ref{prop:S2W-NSpin}.
\end{proof}

Next, we give a proof of Theorem \ref{thm:SM}. 
By (\ref{P0-P1}), there hold equivalence relations 
 \[\p_0\big(H^1(M;\z{r})\big)=0\iff \p_1\big(H^2(\Sigma M;\z{r})\big)=0.\]

\begin{lemma}\label{lem:trivPontrSq}
  If the Pontryagin square $\p_1$ acts trivially on $H^2(\Sigma M;\z{r})$, then so does $\p_1$ on $H^2(W_4;\z{r})$.
  \begin{proof}
    By Lemma \ref{lem:hlgdecomp} and the universal coefficient theorem for cohomology, the canonical inclusion $i\colon W_4\to \Sigma W$ induces isomophisms 
    \begin{align*}
      i^\ast&\colon H^2(\Sigma W;\z{r})\xra{\cong} H^2(W_4;\z{r}),\\
      i^\ast&\colon H^4(\Sigma W;\z{r+1})\xra{\cong} H^2(W_4;\z{r+1}).
    \end{align*}
If $\p_1$ acts trivially on $H^2(\Sigma M;\z{r})$, then so does $\p_1$ on $H^2(\Sigma W;\z{r})$, by (\ref{eq:SW}). The following commutative diagram 
\[\begin{tikzcd}
   H^2(\Sigma W;\z{r})\ar[d,"i^\ast","\cong"swap]\ar[r,"\p_1=0"] &H^4(\Sigma W;\z{r+1})\ar[d,"i^\ast", "\cong"swap]\\
  H^2(W_4;\z{r})\ar[r,"\p_1"]& H^4(W_4;\z{r+1})
\end{tikzcd}\]
then implies $\p_1=0$ on the second row.
  \end{proof}
\end{lemma}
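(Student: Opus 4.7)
The plan is to reduce the claim about $\Sigma M$ to the analogous claim about $\Sigma W$ via the wedge decomposition (\ref{eq:SW}), and then transfer the vanishing from $\Sigma W$ to its subcomplex $W_4$ using naturality of $\p_1$ along the inclusion $i\colon W_4\hookrightarrow \Sigma W$ coming from the homology decomposition.

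First, since $\Sigma M\simeq \bigl(\bigvee_{i=1}^m S^2\bigr)\vee \Sigma W$, I would use the fact that reduced cohomology of a wedge splits as a direct sum and that cup products of classes pulled back from distinct wedge summands vanish. Combined with the quadratic identity (\ref{eq:quad}), this means that on a wedge $A\vee B$ the Pontryagin square decomposes as $\p_1(a+b)=\p_1(a)+\p_1(b)$ when $a,b$ come from the respective summands. Hence $\p_1$ vanishes on $H^2(\Sigma M;\z{r})$ if and only if it vanishes on each wedge summand, so in particular it vanishes on $H^2(\Sigma W;\z{r})$.

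Next, I would extract from Lemma \ref{lem:hlgdecomp} and Table \ref{tab:W} that the inclusion $i\colon W_4\hookrightarrow \Sigma W$ induces $H_j(W_4)\cong H_j(\Sigma W)$ for $j\leq 4$ (and $H_j(W_4)=0$ for $j\geq 5$, while $H_5(\Sigma W)$ is free of rank $m$, accounted for by the cofibration $S^5\xra{k_5}W_4\to \Sigma W$). Applying the universal coefficient theorem with coefficients $\z{r}$ and $\z{r+1}$ in degrees $2$ and $4$ respectively, the relevant $\Hom$ and $\Ext$ terms depend only on $H_j$ in degrees $\leq 4$, so
\[ i^\ast\colon H^2(\Sigma W;\z{r})\xra{\cong} H^2(W_4;\z{r}), \qquad i^\ast\colon H^4(\Sigma W;\z{r+1})\xra{\cong} H^4(W_4;\z{r+1}) \]
are both isomorphisms.

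Finally, naturality of $\p_1$ with respect to $i$ produces a commutative square whose vertical maps are the two isomorphisms above. Since the top horizontal arrow is zero by the first step, the bottom horizontal arrow is forced to be zero, which is the desired vanishing of $\p_1$ on $H^2(W_4;\z{r})$. I do not anticipate a genuine obstacle; the only step requiring some attention is the universal coefficient identification in degree $4$, which is the mild point where one must remember that $H_5(\Sigma W)\neq 0$ but $H_5(W_4)=0$, so one verifies that the $\Ext$ contribution from $H_3$ matches on both sides and the $\Hom$ contribution from $H_4$ matches, while the potentially problematic $H_5$ contribution does not enter $H^4$.
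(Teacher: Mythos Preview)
Your proposal is correct and follows essentially the same route as the paper: reduce from $\Sigma M$ to $\Sigma W$ via the wedge splitting (\ref{eq:SW}), then push the vanishing along the inclusion $i\colon W_4\hookrightarrow \Sigma W$ using naturality of $\p_1$ together with the fact that $i^\ast$ is an isomorphism on $H^2$ and $H^4$ (which follows from Lemma~\ref{lem:hlgdecomp} and the universal coefficient theorem). Your additional remarks---invoking the quadratic identity (\ref{eq:quad}) to justify the wedge step, and noting explicitly that $H_5$ does not enter the UCT computation of $H^4$---are correct elaborations of points the paper leaves implicit.
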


%Utilizing the tool of the Pontryagin squares, we show that $k_3,k_4$ are still null-homotopic even if $T_2\neq 0$ 

 \begin{lemma}\label{lem:w4}
  If the Pontryagin square $\p_1$ acts trivially on $H^2(\Sigma M;\z{r_j})$ for each $j=1,2, \cdots,n$, then there is a homotopy equivalence
  \[W_4\simeq \big(\bigvee_{i=1}^d S^3\big)\vee \big(\bigvee_{i=1}^m S^4\big)\vee P^3(T)\vee P^4(T).\]
\begin{proof}
By Lemma \ref{lem:W_4} there is a homotopy equivalence 
  \[W_4\simeq  \big(\bigvee_{i=1}^dS^3\big)\vee P^4(T)\vee C_{g_2}\]
  for some homologically trivial map $g_2\colon \bigvee_{i=1}^mS^3\to P^3(T)$. It suffices to show the homologically trivial component 
\[g_2\colon S^3\to P^3(T)\]
is null-homotopic.
  By Lemma \ref{lem:HL}, it suffices to show that the components
  \begin{align*}
    g_2^j\colon S^3\xra{g_2} P^3(T_2)\xra{p_j}P^3(2^{r_j})
  \end{align*}
  are null-homotopic for each $j=1,2,\cdots,n$. 
    
Since $\pi_3(P^3(2^r))\cong\z{r+1}$,  we may set 
\[g_2^j=t_j\cdot i_2\eta \]
for some $t_j\in\z{r_j+1}$, $j=1,2,\cdots,n$.
The assumption and Lemma \ref{lem:trivPontrSq} imply that the Pontryagin square 
 \[\p_1\colon H^2(W_4;\z{r_j})\to H^4(W_4;\z{r_j+1})\]
is trivial. By the universal coefficient theorem for cohomology, $g_2^j$ induces trivial homomorphism in mod $2^{r_j}$ or mod $2^{r_j+1}$ cohomology, and hence by Lemma \ref{lem:TrivChAct} (\ref{triv-CohOp}), the Pontryagin square 
  \[\p_1\colon H^2(C_{g_2^j};\z{r_j})\to H^4(C_{g_2^j};\z{r_j+1})\]
  is trivial for each $j$. Then it follows by Lemma \ref{lem:PontrSq} that $t_j=0$, or equivalently $g_2^j$ is null-homotopic for each $j=1,2,\cdots,n$.
  \end{proof}
\end{lemma}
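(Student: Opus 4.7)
The plan is to start from the partial decomposition already obtained in Lemma \ref{lem:W_4}, which gives
\[W_4\simeq \big(\bigvee_{i=1}^dS^3\big)\vee P^4(T)\vee C_{g_2}\]
for some homologically trivial map $g_2\colon \bigvee_{i=1}^m S^3\to P^3(T)$. Since
\[C_{g_2\simeq \ast}\simeq P^3(T)\vee \bigvee_{i=1}^m S^4,\]
the whole task reduces to showing that the hypothesis on the Pontryagin square forces $g_2\simeq \ast$. Once this is established, the displayed equivalence follows by substitution.

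To attack $g_2$, I would decompose it via the matrix method of Subsection \ref{subsec:matrix}. Using the splitting $P^3(T)\simeq P^3(T_2)\vee P^3(T_{\neq 2})$ and projecting onto each wedge summand, $g_2$ is represented by its components into the various $P^3(2^{r_j})$ for $j=1,\dots,n$ together with a part landing in $P^3(T_{\neq 2})$. The component into the odd-primary piece is null-homotopic by the same argument as in \cite[Lemma 5.7]{ST19}, since the relevant odd-primary homotopy groups of Moore spaces vanish in this range. By Lemma \ref{lem:HL}, it therefore suffices to show that each
\[g_2^j\colon S^3\xrightarrow{g_2} P^3(T)\to P^3(2^{r_j})\]
is null-homotopic; and by Lemma \ref{lem:Moore-htpgrps}(2) we can write $g_2^j=t_j\cdot i_2\eta$ for some $t_j\in \mathbb{Z}/2^{r_j+1}$, so the goal is to prove $t_j=0$ for every $j$.

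To exploit the Pontryagin-square hypothesis, I would first use Lemma \ref{lem:trivPontrSq} to propagate triviality of $\mathfrak{P}_1$ from $H^2(\Sigma M;\mathbb{Z}/2^{r_n})$ to $H^2(W_4;\mathbb{Z}/2^{r_n})$. Since each $g_2^j$ is homologically trivial and factors through $W_4$ via the inclusion and projection of a wedge summand, Lemma \ref{lem:TrivChAct}(\ref{triv-CohOp}) then transports the vanishing of $\mathfrak{P}_1$ to $H^2(C_{g_2^j};\mathbb{Z}/2^{r_n})$. Finally, the cofibre $C_{g_2^j}$ is exactly the complex $C(t_j)$ of Lemma \ref{lem:PontrSq} with $r=r_j$; choosing $u=r_n\geq r_j$ (which is where the ordering assumption enters) that lemma computes $\mathfrak{P}_1(x)=t_jy$, forcing $t_j=0$. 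This gives $g_2^j\simeq \ast$ for all $j$, hence $g_2\simeq\ast$, and another application of Lemma \ref{lem:HL} delivers the stated decomposition.

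The main obstacle I anticipate is the coefficient-choice step, i.e.\ justifying that the single mod $2^{r_n}$ Pontryagin-square hypothesis is actually strong enough to kill all the $t_j$'s simultaneously. This is why one needs the ordering $r_1\leq r_2\leq\cdots\leq r_n$: Lemma \ref{lem:PontrSq} requires $u\geq r$, so only the largest $r_n$ works uniformly as the coefficient level, and Lemma \ref{lem:TrivChAct}(\ref{triv-CohOp}) must be verified carefully on each relevant wedge summand so that triviality of $\mathfrak{P}_1$ really restricts to each $C_{g_2^j}$ individually rather than only to the whole cofibre $C_{g_2}$. Once these bookkeeping issues are handled, the conclusion is immediate.
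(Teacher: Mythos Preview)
Your proposal is correct and follows essentially the same route as the paper: reduce to the components $g_2^j$ via Lemma~\ref{lem:W_4}, Lemma~\ref{lem:HL} and the odd-primary vanishing from \cite{ST19}; propagate triviality of $\mathfrak{P}_1$ to $W_4$ via Lemma~\ref{lem:trivPontrSq}; pass to $C_{g_2^j}$ via Lemma~\ref{lem:TrivChAct}(\ref{triv-CohOp}); and then apply Lemma~\ref{lem:PontrSq} with $u=r_n\geq r_j$ (using Lemma~\ref{lem:Moore-htpgrps}(2) to identify $g_2^j$ with $t_j\cdot i_2\eta$) to force $t_j=0$. Your explicit discussion of why the ordering $r_1\leq\cdots\leq r_n$ is needed is a welcome clarification that the paper leaves implicit.
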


\begin{proof}[Proof of Theorem \ref{thm:SM}]
  By Lemma \ref{lem:w4} and (\ref{cof-k'inv}), there is a homotopy cofibration
  \[S^4\xra{k_5}W_4\simeq \big(\bigvee_{i=1}^d S^3\big)\vee \big(\bigvee_{i=1}^m S^4\big)\vee P^3(T)\vee P^4(T)\to \Sigma W\]
  with $k_5$ homologically trivial. Since $\pi_4(P^3(p^r))=\pi_4(P^4(p^r))=0$, Lemma \ref{lem:HL} implies that there is a homotopy equivalence
  \[\Sigma W\simeq \big(\bigvee_{i=1}^mS^4\big)\vee P^3(T_{\neq 2})\vee P^4(T_{\neq 2})\vee C_{\phi},\]
  where $\phi\colon S^4\to (\bigvee_{i=1}^dS^3) \vee P^3(T_2)\vee P^4(T_2)$ is a homologically trivial map. Compare (\ref{eq:S2W}). The discussion on the homotopy type of $\Sigma W$ is totally parallel to that of $\Sigma^2 W$ in the proofs of Proposition \ref{prop:S2W-Spin} and \ref{prop:S2W-NSpin}. The proof then completes by (\ref{eq:SW}). 
\end{proof}

\bibliographystyle{amsalpha}
\bibliography{refs}

\end{document}